\newtheorem{theorem}{Theorem}[section]
\newtheorem{corollary}[theorem]{Corollary}
\newtheorem{lemma}[theorem]{Lemma}
\newtheorem{notation}[theorem]{Notation}
\newtheorem{fact}[theorem]{Fact}
\newtheorem{terminology}[theorem]{Terminology}
\newtheorem{assumption}[theorem]{Assumption}
\theoremstyle{definition}
\newtheorem{definition}[theorem]{Definition}
\newtheorem{remark}[theorem]{Remark}
\newtheorem{example}[theorem]{Example}
\numberwithin{equation}{section}
\newcommand{\norm}[1]{\left\lVert#1\right\rVert}
\newcommand{\bc}{\mathbf{c}}
\newcommand{\be}{\mathbf{e}}
\newcommand{\bff}{\mathbf{f}}
\newcommand{\bY}{\mathbf{Y}}
\newcommand{\bX}{\mathbf{X}}
\newcommand{\bU}{\mathbf{U}}
\newcommand{\bV}{\mathbf{V}}
\newcommand{\bW}{\mathbf{W}}
\newcommand{\bS}{\mathbf{S}}
\newcommand{\bE}{\mathbf{E}}
\newcommand{\bG}{\mathbf{G}}
\newcommand{\bGamma}{\mathbf{\Gamma}}
\newcommand{\bpi}{\boldsymbol{\pi}}
\newcommand{\bmu}{\boldsymbol{\mu}}
\newcommand{\Z}{\mathbb{Z}}
\newcommand{\R}{\mathbb{R}}
\newcommand{\Q}{\mathbb{Q}}
\newcommand{\A}{\mathbb{A}}
\newcommand{\N}{\mathbb{N}}
\newcommand{\W}{\mathscr{W}}
\newcommand{\cC}{\mathcal{C}}
\newcommand{\cL}{\mathcal{L}}
\newcommand{\trdeg}{\mathop{\rm tr.deg}\nolimits}
\newcommand{\dd}{\mathbf{d}}
\newcommand{\chr}{\text{\rm char}}
\newcommand{\Set}{{\rm Set}}
\newcommand{\tr}{{\rm tr}}
\newcommand{\F}{\mathbb{F}}
\newcommand{\Alg}{\text{\rm-Alg}}
\newcommand{\Rng}{\text{\rm-Rng}}
\newcommand{\Sch}{\text{\rm-Sch}}
\newcommand{\Hom}{\text{\rm Hom}}
\newcommand{\dcl}{dcl}
\newcommand{\acl}{acl}
\newcommand{\bdd}{bdd}
\newcommand{\Cb}{Cb}
\def\Ind#1#2{#1\setbox0=\hbox{$#1x$}\kern\wd0\hbox to 0pt{\hss$#1\mid$\hss}
\lower.9\ht0\hbox to 0pt{\hss$#1\smile$\hss}\kern\wd0}
\def\Notind#1#2{#1\setbox0=\hbox{$#1x$}\kern\wd0\hbox to 0pt{\mathchardef
\nn="3236\hss$#1\nn$\kern1.4\wd0\hss}\hbox to 0pt{\hss$#1\mid$\hss}\lower
.9\ht0\hbox to 0pt{\hss$#1\smile$\hss}\kern\wd0}
\def\ind{\mathop{\mathpalette\Ind{}}}
\def\nind{\mathop{\mathpalette\Notind{}}}
\begin{document}


\baselineskip=17pt


\title[Graphons arising from graphs definable over finite fields]{Graphons arising from graphs definable over finite fields}

\author[M. D{\v z}amonja]{Mirna D{\v z}amonja}
\address{IRIF (CNRS-Université de Paris)\\
        Bâtiment Sophie Germain, Case courrier 7014\\
        8 Place Aurélie Nemours,
        75205 Paris Cedex 13, France}
\email{mdzamonja@irif.fr}

\author[I. Toma{\v s}i{\'c}]{Ivan Toma{\v s}i{\'c}}
\address{School of Mathematical Sciences\\
  	Queen Mary University of London\\
        London, E1 4NS\\
        United Kingdom}
\email{i.tomasic@qmul.ac.uk}

\date{}

\begin{abstract}
We prove a version of Tao's algebraic regularity lemma for asymptotic classes in the context of graphons. We apply it to study expander difference polynomials over fields with powers of Frobenius.
\end{abstract}

\subjclass[2020]{Primary 03C60, 11G25. Secondary 14G10, 14G15.}

\keywords{graphons, regularity lemma, finite fields,  Frobenius automorphism, ACFA}

\maketitle

\section{Introduction}

\subsection{Historical overview and the statement of results}
Tao's algebraic regularity lemma is a variant of the celebrated Szemer\'{e}di's regularity lemma that applies to graphs that can be defined by a first-order formula over finite fields. It states that such a graph can be decomposed into definable pieces which are roughly about the same size and such that the edges between those pieces behave almost randomly. This process is referred to as \emph{regularisation}.
It was proved by Tao in \cite{tao-original} in order to study expander polynomials over finite fields, and initially it was formulated for fields of large enough characteristics. 

Further developments on Tao's lemma have a somewhat complex history.
In a private correspondence to Tao, Hrushovski \cite{udi-letter} gave another proof of the lemma using the model-theoretic tools for studying the growth rates of definable sets over finite fields, as developed by Chatzidakis-van den Dries-Macintyre in \cite{CDM}. Independently, Pillay and Starchenko gave an analogous proof in the preprint \cite{pillay-starchenko}. The advantage of these proofs is that they remove the requirement of the large characteristics of the field.
Pillay and Starchenko state that their proof also works for `measurable'  structures studied in the the context of asymptotic classes of finite structures by Macpherson-Steinhorn \cite{dugald-charlie} and Elwes and Macpherson \cite{dugald-richard}. Garcia-Macpherson-Steinhorn \cite{GMS} state, without proof, a version of the lemma for classes of finite structures controlled by suitable pseudofinite dimensions, which potentially generalise asymptotic classes.

In this paper we give a proof of Tao's algebraic regularity lemma for finite fields of any characteristics using concepts from the theory of graphons, and we adapt the lemma to \emph{relative} asymptotic classes. We apply it 
to study expander difference polynomials over fields with powers of Frobenius.
As far as the algebraic regularity lemma for fields goes, our proof benefited from the strategy used in the revisited and simplified proof of his lemma by Tao in his blog article \cite{tao-blog}. 

Upon writing a preliminary version of this article, we received a copy of \cite{udi-letter}. We now understand that Hrushovski envisaged in that letter that Tao's regularity lemma applies to fields with Frobenius (\ref{frob-fields}, \ref{reg-frob} and that Tao's \emph{algebraic constraint} is directly related to the existence of a model-theoretic \emph{group configuration}.

\subsection{Graphons}\label{graphons-context}
The space of graphons with the cut metric is known as one of the most suitable contexts for studying the limit behaviour of (dense) finite graphs, as detailed in \cite{lovasz}. 

It is also extremely useful for formulating and proving regularity lemmas. Indeed, to express that a graph can be regularised to a certain precision, it is enough to say that it is sufficiently close to a \emph{stepfunction} in the space of graphons, a piecewise constant probability function on the unit square encoding a rectangular array of the edge densities of pieces forming a regular decomposition of the graph, see Definition~\ref{def-stepfun}.   

The algebraic regularity lemma in this context states the following, see theorems \ref{reg-asymp}, \ref{reg-cdm} and Corollary~\ref{accumul}.

\begin{quote}
\emph{In the space of graphons, the set of accumulation points of the family of realisations of a definable bipartite graph over the structures ranging in an asymptotic class is a finite set of stepfunctions.}
\end{quote}

\emph{Asymptotic classes} are defined as classes of finite structures such that growth rates of definable sets are controlled by rational monomials \ref{asymp-cl-def}. Consequently, stepfunctions appearing in the above take arbitrary rational probability values.  In contrast, several recent regularity lemmas from other contexts, including \cite{malliaris-shelah} in \emph{stable} and \cite{chernikov-starchenko} in \emph{distal} context can be viewed as 0-1 laws, and the resulting stepfunctions only take values 0 and 1.

\subsection{Definable regularisation}\label{uniform} An important feature of this note is a uniform formulation of the lemma. We emphasise that realisations of a single \emph{definable stepfunction} over finite structures from an asymptotic class can be used to regularise a definable graph. The proof is almost constructive, and we carefully analyse the space of parameters needed to define the regularising stepfunction. It becomes apparent that there are \emph{finitely many} possible asymptotic behaviours of its realisations as we vary the finite structures in an asymptotic class.

The strategy of proof is congenial to the context of graphons in view of the fact that we start with a general weak regularity Lemma~\ref{wk6-reg} for graphons, and then use definability and Chatzidakis-van den Dries-Macintyre properties to dramatically improve the conclusion.

\subsection{Fields with powers of Frobenius}\label{frob-fields}
We draw attention to the fact that the algebraic regularity lemma applies to finite-dimensional bipartite graphs definable in the language of difference rings over the class of algebraic closures of finite fields equipped with powers of the Frobenius automorphism, which was shown to be an asymptotic class in \cite{ive-mark} (with a suitable modification to the definition of `asymptotic class'). This is a significant generalisation of the context of finite fields and a potential new source of interesting examples. We outline the possibility of studying \emph{difference expander polynomials} and we describe the difference morphisms which are moderate expanders.

\subsection{Novelty in the paper}

The `bounded-complexity' statements  of Tao's regularity lemma in various contexts have been (independently) known to experts. Our main proof loosely follows the strategy of the proof from \cite{tao-blog}. 

The added value of this paper consists of the following contributions.
\begin{enumerate}
\item The uniform statement of the result as discussed in \ref{uniform}, for a given set of formulae defining the initial graph and without the use of the notion of bounded complexity of formulae, which makes the proof more constructive.
\item It is aesthetically convenient to state the regularity lemma in the space of graphons, as noted in \ref{graphons-context}.
\item The profound reason for working in the space of graphons is to fix the arguments from Tao's proof  \cite{tao-blog} in a single space within  the realm of  classical functional analysis. While each step in his proof is formulated for operators on finite-dimensional normed spaces, the dimensions grow with each step so it is difficult to track the key objects through the changes, or to give a bound on the dimensions of relevant spaces a priori in terms of the complexity of the formulae defining the given bipartite graph.
\item While finite fields are extensively used in combinatorics, fields with powers of Frobenius touched upon in \ref{frob-fields} are less known. We hope that our treatment will introduce them to a wider audience. 
\item As pointed out by Hrushovski in \cite{udi-letter}, as well as Tao in a number of talks, the hidden group structure in `solving the algebraic constraint'-type theorems needed to discuss expanding properties of polynomials can be uncovered using the model-theoretic \emph{group configuration} arguments. We present these ideas as explicitly as possible in 
\ref{alg-constraint} whilst proving the expanding dichotomy for `difference polynomials'.

\end{enumerate}

\subsection{Future work}

Tao's lemma, as much as the original Szemer\'{e}di's lemma, is only interesting in the context of reasonably dense graphs. 
After much research on obtaining an analogue of Szemer\'{e}di's lemma for sparse graphs, as reported by Kohayakawa in \cite{kohayakawa}, such an analogue was found by Scott in \cite{scott}. In further work we hope to extend our ideas from graphons to the Ne\v{s}et\v{r}il-Ossona de Mendez notion of \emph{measurings} \cite{unified} in order to obtain local versions of Tao's regularity lemma which would deal with sparse definable graphs. This seems like a viable programme in the view of  Scott's lemma \cite{scott} and the fact that measurings give a unified approach to combinatorial limits which has graphons as a special case, but also applies to classes of sparse graphs.

\subsection{Acknowledgement}
We would like to thank the anonymous referee for their dedicated work on the paper and numerous suggestions for improvements.

\section{Background: Graphs and Graphons}

The background on the space of graphons given in this section is based almost entirely on the book \cite{lovasz} by Lov\'asz, presented with minor modifications needed to treat our `bipartite/non-symmetric'  version of kernels and graphons. 

\subsection{Kernels and graphons}
\begin{definition}
\begin{enumerate}
\item The space of \emph{kernels} $\tilde{\W}$ is the space of the equivalence classes of essentially bounded measurable functions $[0,1]^2\to\R$ modulo equivalence up to a measure 0 set, i.e., the underlying vector space of $\tilde{\W}$ is $L^\infty([0,1]^2)$. Following the usual conventions, we often abuse the notation and speak of `functions' or `kernels' (and `graphons' below) in terms of functions rather than the equivalence classes of functions, by taking representatives and by using the notation $\W$ in place of $\tilde{\W}$. Sometimes we need to be more explicit, for example
a suitable metric on $\tilde{\W}$ will be introduced below, but it is only a pseudo-metric on $\W$.
\item The space of \emph{graphons} is
$$
\W_0=\{W\in\W:0\leq W\leq 1\}.
$$
\item We write
$$
\W_1=\{W\in\W:-1\leq W\leq 1\}.
$$
\end{enumerate}
The inequalities are given up to a measure zero set (see Definition \ref{measurezero}).
\end{definition}

\begin{remark}\label{comp-symm} The classical literature on graphons \cite{lovasz} usually stipulates that kernels and graphons have to be symmetric functions, and we write $\W^{\text{sym}}, \W_0^{\text{sym}}$ and  $\W_1^{\text{sym}}$ for the corresponding spaces. 

Given that we are primarily interested in studying the limits of bipartite and directed graphs, we \emph{do not} assume the symmetry, but our context is easily reconciled with the classical one through the map 
$$
\W\to\W^\text{sym},
$$
assigning 
to every $W\in \W$ the symmetric `bipartite kernel' 
$$
W^{\text{sym}}(x,y)=\begin{cases} W(2x,2y-1) & \text{ when } (x,y)\in[0,1/2]\times[1/2,1],\\
W^*(2x-1,2y) & \text{ when } (x,y)\in[1/2,1]\times[0,1/2],\\
0 & \text{ elsewhere}.
\end{cases}
$$
Here $W^\ast$ denotes \emph{transpose} kernel of $W$, defined by
$$
W^*(x,y)=W(y,x).
$$ 
The above map will yield a closed embedding in the cut metric from \ref{cut-norm-metric} and will preserve the membership in $\W, \W_0$ and  $\W_1$, which allows us to resort to the classical theory of symmetric graphons whenever needed.
\end{remark}

\begin{definition}\label{measurezero}
Let $U$, $W$ be two kernels. 
\begin{enumerate}
\item We say that $U$ and $W$ are \emph{equal almost everywhere} if the set $\{(x,y)\in[0,1]^2:U(x,y)\neq W(x,y)\}$ is a null set with respect to the Lebesgue measure.
\item We say that $U$ and $W$ are \emph{isomorphic up to a null set} if there exist invertible measure-preserving maps $\varphi,\psi:[0,1]\to[0,1]$ such that $U$ and
$$
W^{\varphi,\psi}(x,y)=W(\varphi(x),\psi(y))
$$
are equal almost everywhere.

\end{enumerate}

\end{definition}

\subsection{Graphs as graphons}

\begin{definition}\label{def-stepfun}
A \emph{stepfunction} is a kernel $W$ such that there exist partitions $[0,1]=\coprod_{i=1}^n U_i$ and $[0,1]=\coprod_{j=1}^m V_j$ into measurable sets so that $W$ is constant on each $U_i\times V_j$. 
\end{definition}

We often informally refer to the number $\max(m,n)$ as the number of \emph{steps} of the stepfunction as above. 

\begin{definition}
A \emph{weighted bipartite graph} $\Gamma=(U,V,E,w)$ consists of  a bipartite graph $(U,V,E\subseteq U\times V)$ such that
\begin{enumerate}
\item for $i\in U$, we have a positive real weight $w_i$;
\item for $j\in V$, we have a positive real weight $w_j$;
\item for each edge $(i,j)\in E$, we have a weight $w_{ij}\in \R$. 
\end{enumerate}
The above assignment can be extended to $U\times V$ by stipulating that $w_{ij}=0$ whenever $(i,j)$ is not an edge. 
\end{definition}

\begin{definition}\label{assoc-stepfun}
Let $\Gamma=(U,V,E\subseteq U\times V,w)$ be a finite weighted bipartite graph. 
We define the associated stepfunction
$$
W(\Gamma)
$$ 
by considering the partitions $[0,1]=\coprod_{i\in U} U_i$ and $[0,1]=\coprod_{j\in V} V_j$ with lengths
$\mu(U_i)=w_i/w_U$ and $\mu(V_j)=w_j/w_V$, where $w_U=\sum_{i\in U}w_i$, $w_V=\sum_{j\in V}w_j$,  and by letting 
$$
W(\Gamma){\restriction}_{U_i\times V_j}=w_{ij}
$$
for $i\in U$, $j\in V$.
\end{definition}
By a slight abuse of notation, whenever we wish to consider a graph $\Gamma$ in the space of graphons, we implicitly identify $\Gamma$ with $W(\Gamma)$.

\begin{notation}\label{assoc-incid}
Let $\Gamma$ be a finite \emph{edge-weighted} bipartite graph, i.e., a weighted bipartite graph with all node weights 1, and let $A=(w_{ij})$ be its matrix of edge weights. We write
$$
W(A)=W(\Gamma). 
$$
In particular, we use the notation $$W(w)$$ for the constant kernel with value $w$.
\end{notation}



\subsection{Kernel operators}

\begin{definition}
The \emph{kernel operator} $T_W:L^1[0,1]\to L^\infty[0,1]$ associated to a kernel $W\in\W$ is defined by
$$
(T_W f)(x)=\int_0^1W(x,y)f(y) dy.
$$ 
\end{definition}

\begin{remark}
Considered as an operator $L^2[0,1]\to L^2[0,1]$, $T=T_W$ is a Hilbert-Schmidt operator; it is a compact operator, with a \emph{singular value decomposition}
$$
T(f)=\sum_i\sigma_i\langle f,u_i\rangle v_i,
$$
where $\{u_i\}$ and $\{v_i\}$ are orthonormal sets and $\sigma_i$ are positive with $\sigma_i\to 0$ such that the \emph{Hilbert-Schmidt norm} satisfies
$$
\norm{T}_2^2=\tr(T^* T)=\sum_i\sigma_i^2=\norm{W}_{L^2([0,1]^2)}^2<\infty.
$$

The spectrum of $T$ is discrete, and the nonzero eigenvalues $\lambda_i$ satisfy $\lim_i\lambda_i=0$.
If $W$ is symmetric, the eigenvalues $\lambda_i$ are real and we have a spectral decomposition 
$$
W(x,y)\sim \sum_k \lambda_k f_k(x)f_k(y),
$$
where $f_k$ is the normalised eigenfunction corresponding to the eigenvalue $\lambda_k$.
\end{remark}

\subsection{Operations on kernels}

\begin{definition}
Let $W_{i}$ be a countable family of kernels, and let $a_i$ and $b_i$ be positive real numbers with $\sum_i a_i=1$ and $\sum_ib_i=1$. The \emph{direct sum of kernels $W_{i}$ with weights $(a_i,b_i)$}, denoted
$$
W=\oplus_{i}(a_i,b_i)W_{i},
$$
is defined as follows. We partition the interval $[0,1]$ into intervals $I_i$ of lengths $a_i$ and also into intervals $J_i$ of lengths $b_i$. We consider the monotone affine maps $\varphi_i$ mapping $I_i$ onto $[0,1]$, and $\psi_i$ mapping $J_i$ onto $[0,1]$, and we let
$$
W(x,y)=\begin{cases} W_i(\varphi_i(x),\psi_i(y)), & \text{if } x\in I_i, y\in J_i\\
0 & \text{otherwise.}
\end{cases}
$$
\end{definition}
A kernel is said to be \emph{connected}, if it is not isomorphic up to a null set to a non-trivial direct sum of kernels.

Apart from the obvious linear structure, we consider the following operations on $\W$.
\begin{definition}
Let $U$ and $W$ be two kernels.
\begin{enumerate}
\item Their \emph{product} is the kernel
$$
(UW)(x,y)=U(x,y)W(x,y).
$$
\item Their \emph{operator product} is the kernel
$$
(U\circ W)(x,y)=\int U(x,z)W(z,y)\, dz.
$$
\end{enumerate}
\end{definition}
\begin{remark}
If $U$ and $W$ are kernels, then, considering the associated kernel operators as operators on $L^2[0,1]$, we have
$$
T_W^*=T_{W^*}, \ \ \ \text{ and }\ \ \  T_{U\circ W}=T_U\, T_W.
$$
\end{remark}

\subsection{The cut norm and distance}

\begin{definition}\label{cut-norm-metric}
The \emph{cut norm} on the linear space $\W$ of kernels is defined by
$$
\norm{W}_\square=\sup_{S,T\subseteq[0,1]}\left|\int_{S\times T}W(x,y)\,dx\,dy\right|,
$$
where $S$ and $T$ vary over all measurable subsets of $[0,1]$. 

The associated \emph{cut metric} is 
$$
d_\square(U,W)=\norm{U-W}_\square.
$$
\end{definition}

\begin{fact}\label{fact:estimate}
For $W\in\W_1$, we have the inequalities
$$
\norm{W}_\square\leq\norm{W}_1\leq\norm{W}_2\leq\norm{W}_\infty\leq 1.
$$
\end{fact}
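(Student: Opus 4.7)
The plan is to verify the four inequalities from right to left, each being a standard consequence of the fact that $([0,1]^2,dx\,dy)$ is a probability space. The rightmost inequality $\norm{W}_\infty\le 1$ is immediate from the defining condition $-1\le W\le 1$ of $\W_1$. The next two are the usual monotonicity of $L^p$-norms on a probability space: $\norm{W}_2\le\norm{W}_\infty$ follows from $\int W^2\le\norm{W}_\infty^2\cdot 1$, and $\norm{W}_1\le\norm{W}_2$ is Cauchy--Schwarz applied to $|W|$ and the constant function $1$, using again that $\int_{[0,1]^2}1 = 1$.

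For the remaining inequality $\norm{W}_\square\le\norm{W}_1$, the key observation is that for any measurable $S,T\subseteq[0,1]$ one has
$$
\Bigl|\int_{S\times T}W(x,y)\,dx\,dy\Bigr|\le \int_{S\times T}|W|\le\int_{[0,1]^2}|W|=\norm{W}_1,
$$
after which taking the supremum over all pairs $(S,T)$ on the left yields the claim from the definition of $\norm{\cdot}_\square$ in \ref{cut-norm-metric}.

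There is no genuine obstacle here: the entire fact is a compendium of standard $L^p$-inclusions on a probability space together with the trivial domination of a signed integral over a subset by the $L^1$-norm of the integrand. The only point to keep in mind is that the unit total mass of Lebesgue measure on $[0,1]^2$ is what makes the $L^p$-norms nondecreasing in $p$, so the argument is essentially bookkeeping rather than analysis.
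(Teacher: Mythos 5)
Your proof is correct and complete; the paper states this as a Fact without proof, and your argument is exactly the standard one that is implicitly intended: the rightmost bound from the pointwise constraint defining $\W_1$, the middle two from monotonicity of $L^p$-norms on a probability space (Cauchy--Schwarz with the constant function), and the leftmost from dominating the cut-norm integral by the $L^1$-norm and taking the supremum.
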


\begin{definition}
Let $S_{[0,1]}$ denote the group of all invertible measure-preserving maps $[0,1]\to[0,1]$. The \emph{cut distance} between kernels $U$ and $W$ is
$$
\delta_\square(U,W)=\inf_{\varphi,\psi\in S_{[0,1]}} d_\square(U,W^{\varphi,\psi}).
$$
\end{definition}

\subsection{Regularity/homogeneity and the cut distance}

\begin{definition}\label{hom-reg}
Let $\Gamma=(U,V,E\subseteq U\times V)$ be a finite bipartite graph and $\epsilon>0$.
\begin{enumerate}
\item We say that $\Gamma$ is \emph{$\epsilon$-homogeneous of density $w\in[0,1]$} provided, for every $A\subseteq U$ and $B\subseteq V$,
$$
\left| \, \, | E\cap (A\times B)|-w|A||B| \, \,\right|\leq \epsilon |U||V|.
$$
\item We say that $\Gamma$ is \emph{$\epsilon$-regular of density $w\in[0,1]$} if, for every $A\subseteq U$ with $|A|>\epsilon|U|$ and $B\subseteq V$ with $|B|>\epsilon|V|$,
$$
\left | \, \, |E\cap (A\times B)|-w|A||B| \, \,\right|\leq \epsilon |A||B|.
$$
\end{enumerate}
\end{definition}

The well-known connection between regularity lemmas (the existence of a homogeneous partition) and the cut metric (see the discussion following 9.3 in \cite{lovasz}) is explained in the following lemma.

\begin{lemma}\label{regularity}
Let $\Gamma=(U,V,E\subseteq U\times V)$ be a finite bipartite graph, $\epsilon>0$.
\begin{enumerate}
\item 
The graph $\Gamma$ is $\epsilon$-homogeneous with density $w\in[0,1]$ if and only if $d_\square(W(\Gamma),W(w))\leq\epsilon$.
\item Suppose that there exist partitions
$U=\coprod_{i=1}^n U_i$ and $V=\coprod_{j=1}^m V_j$ so that 
$\Gamma\restriction (U_i\times V_j)$ is $\epsilon$-homogeneous with density $w_{ij}$. Let $\bar{\Gamma}$ be a weighted bipartite graph on $U$, $V$ such that all vertices have weight 1 and all edges between $U_i$ and $V_j$ have weight $w_{ij}$. Then
$$
d_\square(W(\Gamma),W(\bar{\Gamma}))\leq\epsilon. 
$$
\end{enumerate}
\end{lemma}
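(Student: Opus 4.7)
The plan is to reduce each claim to a direct computation of the cut norm, exploiting the block structure of the stepfunctions involved.

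For part (1), I would exploit that $W(\Gamma)$ and the constant graphon $W(w)$ are both constant on the vertex-rectangles $U_i\times V_j$, where $U_i\subseteq[0,1]$ denotes the unit-weight interval assigned to vertex $i\in U$ (similarly for $V_j$). For any measurable $S,T\subseteq[0,1]$, the integral $\int_{S\times T}(W(\Gamma)-W(w))\,dx\,dy$ depends only on the scalar overlaps $s_i=\mu(S\cap U_i)/\mu(U_i)$ and $t_j=\mu(T\cap V_j)/\mu(V_j)$, and a brief computation shows it equals $\frac{1}{|U||V|}\sum_{ij}(\mathbf{1}_E(i,j)-w)s_it_j$. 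Being bilinear in $(s,t)\in[0,1]^{|U|}\times[0,1]^{|V|}$, its absolute value is separately convex in $s$ and in $t$, so it attains its supremum at a vertex of the cube, which corresponds to sets of the form $S=\bigsqcup_{i\in A}U_i$ and $T=\bigsqcup_{j\in B}V_j$ for some $A\subseteq U$, $B\subseteq V$. Consequently $d_\square(\Gamma,W(w))=\frac{1}{|U||V|}\sup_{A,B}\bigl||E\cap(A\times B)|-w|A||B|\bigr|$, from which the stated equivalence is immediate.

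For part (2), the plan is to localise the cut integral blockwise. Since $W$ is constant $w_{ij}$ on the block $U_i\times V_j$, for any measurable $S,T\subseteq[0,1]$ one has $\int_{S\times T}(W(\Gamma)-W)=\sum_{i,j}\int_{(S\cap U_i)\times(T\cap V_j)}(W(\Gamma)-w_{ij})$. To bound each summand I would rescale $U_i\times V_j$ by measure-preserving affine maps onto $[0,1]^2$, so that $W(\Gamma){\restriction}_{U_i\times V_j}$ becomes $W(\Gamma[U_i,V_j])$; the Jacobian contributes a factor $\mu(U_i)\mu(V_j)$, and part (1) applied to $\Gamma[U_i,V_j]$ bounds the rescaled cut integral by $\epsilon$. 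Summing over $i,j$ and using $\sum_i\mu(U_i)=\sum_j\mu(V_j)=1$ then gives the desired bound $d_\square(\Gamma,W)\leq\epsilon$.

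The principal point to check, and the only genuine obstacle, is the rescaling step in part (2): one has to verify that the cut-norm contribution of a sub-rectangle of $[0,1]^2$ transforms under affine reparametrisation in the expected manner, so that the intrinsic error $\epsilon|U_i||V_j|$ coming from $\epsilon$-homogeneity of $\Gamma[U_i,V_j]$ converts into the ambient-scale bound $\epsilon\mu(U_i)\mu(V_j)$. Once this transport is in place, the partition identity $\sum_{i,j}\mu(U_i)\mu(V_j)=1$ ensures that the $\epsilon$ propagates with no loss proportional to the number $nm$ of blocks, and this is the substantive content of the lemma.
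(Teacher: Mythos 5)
The paper states this lemma without a proof, so there is no written argument in the paper to compare against; evaluating your proposal on its own merits, it is correct. Part (1) is the standard bilinear-programming observation: the cut integral of the difference of two stepfunctions constant on the vertex rectangles is a bilinear form in the fractional overlaps $(s_i,t_j)\in[0,1]^{|U|}\times[0,1]^{|V|}$, so its absolute value is maximised at a vertex of the cube, and the resulting identity $d_\square(\Gamma,W(w))=\frac{1}{|U||V|}\sup_{A,B}\bigl||E\cap(A\times B)|-w|A||B|\bigr|$ makes the equivalence with $\epsilon$-homogeneity immediate. Part (2) then correctly localises the cut integral over the blocks $U_i\times V_j$, rescales each block affinely onto $[0,1]^2$ (this is the content of the transport step you flag), picks up the Jacobian factor $\mu(U_i)\mu(V_j)$, applies part (1) to $\Gamma[U_i,V_j]$ on the rescaled block, and sums using $\sum_{i,j}\mu(U_i)\mu(V_j)=1$. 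One small wording slip: the affine rescalings are not measure-preserving (they dilate by $1/\mu(U_i)$, etc.); since you account for the Jacobian explicitly the computation is still right, but you should call them monotone affine bijections rather than measure-preserving maps.
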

\begin{proof}
Item (1) follows from definitions of homogeneity, associated graphon and cut distance.  Suppose that $\Gamma$ satisfies the assumptions of (2). If we write $\bar{U}_i$ and $\bar{V}_j$ for segments in $[0,1]$ corresponding to $U_i$ and $V_j$ in $W(\Gamma)$, using (1), we obtain that, for measurable $A_i\subseteq \bar{U}_i$ and $B_j\subseteq\bar{V}_j$, 
$$
\left|\int_{A_i\times B_j}W(\Gamma)(x,y)-w_{ij}\, dx\, dy\right|\leq \epsilon \mu(\bar{U}_i)\mu(\bar{V}_j).
$$
Hence, for measurable $A,B\subseteq[0,1]$, we obtain
\begin{multline*}
\left|\int_{A\times B}W(\Gamma)(x,y)-W(\bar{\Gamma})\, dx\, dy\right| \\=
\left|\sum_{i,j}\int_{A_i\times B_j}W(\Gamma)(x,y)-w_{ij})\, dx\, dy\right| 
 \leq \sum_{i,j} \epsilon \mu(\bar{U}_i)\mu(\bar{V}_j)=\epsilon.
\end{multline*}
\end{proof}

\subsection{The space of graphons}

The cut distance is only a pseudo-metric on $\W_0$, so here we work with $\widetilde{\W}_0$, the metric space of classes of graphons at non-zero distance.

\begin{theorem}
The space $(\widetilde{\W}_0,\delta_\square)$ is compact.
\end{theorem}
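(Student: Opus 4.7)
The plan is to combine the weak regularity Lemma~\ref{wk6-reg} with a diagonal extraction in finite-dimensional compact sets. Let $(W_n)_{n\in\N}$ be a sequence in $\W_0$; I aim to produce a subsequence converging in $\delta_\square$.

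First, for each integer $k\geq 1$, I apply the iterative form of the weak regularity lemma to obtain nested partitions $\mathcal{P}_n^{(1)}\preceq\mathcal{P}_n^{(2)}\preceq\cdots$ of $[0,1]$ (on each side), with $|\mathcal{P}_n^{(k)}|\leq M(k)$ for some function $M$ depending only on $k$, together with stepfunction approximations $W_n^{(k)}\in\W_0$ adapted to $\mathcal{P}_n^{(k)}$ and satisfying $d_\square(W_n,W_n^{(k)})\leq 1/k$. After composing with suitable measure-preserving bijections $\varphi_n,\psi_n\in S_{[0,1]}$ (an operation which preserves both $d_\square$ and the class in $\widetilde{\W}_0$), I may assume that the atoms of $\mathcal{P}_n^{(k)}$ are subintervals of $[0,1]$ arranged in a canonical order, say by decreasing length and then lexicographically; thanks to the nested structure, a single pair of rearrangements works for every $k$ simultaneously.

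Next, for each fixed $k$, the set of stepfunctions in $\W_0$ whose supporting grid has at most $M(k)^2$ cells with endpoints in $[0,1]$ and values in $[0,1]$ is parametrised by a bounded box in a Euclidean space of bounded dimension, and is therefore sequentially precompact in $L^1$, hence in the cut norm via $\norm{\cdot}_\square\leq\norm{\cdot}_1$. A standard diagonal argument extracts a subsequence $(W_{n_j})_j$ such that for every $k$ there exists $U^{(k)}\in\W_0$ with $d_\square(W_{n_j}^{(k)},U^{(k)})\to 0$ as $j\to\infty$.

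Two triangle-inequality estimates complete the argument. The bound
\[
d_\square(U^{(k)},U^{(l)})\leq d_\square(U^{(k)},W_{n_j}^{(k)})+\tfrac{1}{k}+\tfrac{1}{l}+d_\square(W_{n_j}^{(l)},U^{(l)})
\]
and $j\to\infty$ show that $(U^{(k)})_k$ is Cauchy in $d_\square$; its $L^1$-limit $U$ lies in $\W_0$, and a similar estimate gives $d_\square(W_{n_j},U)\to 0$, whence $\delta_\square(W_{n_j},U)\to 0$ as required. The main obstacle is organising the measure-preserving rearrangements uniformly in $k$: a priori the weak-regularity partitions at different levels could demand incompatible canonical relabellings of $[0,1]$, and extracting a convergent subsequence only after $k$-dependent rearrangements would not produce a coherent limit. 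Working with nested partitions and a canonical order on their atoms circumvents this, but requires verifying that Lemma~\ref{wk6-reg} can be applied iteratively to yield partitions refining those from earlier stages; this is the technical heart of the argument.
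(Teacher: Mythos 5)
Your proposal goes a genuinely different way from the paper. The paper's proof is a two-line reduction: compactness of the space of \emph{symmetric} graphons is the Lov\'asz--Szegedy theorem, and the non-symmetric case follows by pushing forward along the map $W\mapsto W^{\text{sym}}$ of Remark~\ref{comp-symm}, which is asserted to be a closed embedding for $\delta_\square$. You instead attempt to re-derive the Lov\'asz--Szegedy theorem itself (weak regularity followed by nested partitions, diagonal extraction, and a limiting argument), which is the content of the result the paper is citing. That strategy is sound in outline, but your execution has two concrete gaps.

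First, Lemma~\ref{wk6-reg} does not say what you use it for. That lemma produces a stepfunction $W'$ with $\norm{W^6-W'}_\infty\leq 2\epsilon^2$, where $W^6=W\circ W^*\circ W\circ W^*\circ W\circ W^*$; it is a statement about the \emph{sixth operator power} of $W$ in the \emph{sup norm}, not about $W$ itself in the cut norm. There is no way to pass from control of $W^6$ in $\norm{\cdot}_\infty$ to a bound $d_\square(W_n,W_n^{(k)})\leq 1/k$ on $W_n$, which is what your construction requires. What you need is the Frieze--Kannan weak regularity lemma (a partition $\mathcal{P}$ with $2^{O(\epsilon^{-2})}$ parts and $d_\square(W,W_{\mathcal{P}})\leq\epsilon$, where $W_{\mathcal{P}}$ is the conditional expectation of $W$ over the grid), in the iterated, nested form. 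That statement is not proved, nor even stated, in this paper; Lemma~\ref{wk6-reg} is a different tool adapted to the algebraic regularity argument.

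Second, your limiting step is not justified. You show that $(U^{(k)})_k$ is Cauchy in $d_\square$, and then appeal to ``its $L^1$-limit $U$''. But the cut norm is strictly weaker than $\norm{\cdot}_1$, so Cauchy in $d_\square$ does not give an $L^1$-limit, nor does it immediately identify a graphon $U\in\W_0$ to which the $U^{(k)}$ converge. In the Lov\'asz--Szegedy argument this is exactly where the nested partition structure earns its keep: because each $U^{(k)}$ is the conditional expectation of $U^{(k+1)}$ with respect to the coarser grid, the sequence $(U^{(k)})$ is a bounded martingale and therefore converges in $L^1$ by martingale convergence. You need that (or some substitute, such as a weak-$*$ limit in $L^\infty$ combined with an argument that the weak-$*$ limit agrees with the cut-norm limit), and your proposal omits it entirely; the ``technical heart'' you identify (uniform rearrangements) is the easier of the two places where the nested structure is used.
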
 
\begin{proof}
For symmetric graphons, this is a known theorem of Lov\'asz and Szegedy \cite[Theorem 5.1]{lov-sze}, which essentially follows from a variant of Szemer\'edi's regularity lemma. Our version can be deduced from it through the 
closed embedding from \ref{comp-symm},
%
%
which maps $\W_0$ onto a closed subset of the space of symmetric graphons.
\end{proof}

\section{Background: Asymptotic classes of finite structures}


\subsection{The yoga of definable sets}
Definability in classes of finite structures, in the absence of a monster model, is a somewhat delicate matter. While all this is standard for a logician, we hope to improve the exposition for a reader with a combinatorics background by adopting the following notation inspired by category theory.

Let $\cC$ be an arbitrary category of structures for a fixed first-order language $\cL$ with (substructure) embeddings as morphisms, and let $\cC^\prec$ be the subcategory in which the morphisms are elementary embeddings.

Given  a first-order formula $\varphi(x)$ in the language $\cL$ in variables $x=x_1,\ldots,x_n$, we define the assignment
$$
\tilde{\varphi}:\cC\to\Set, \ \ \ \ \tilde{\varphi}(F)=\varphi(F)=\{a\in F^n\,:\,F\models\varphi(a)\},
$$
mapping a structure $F$ in $\cC$ to the set of realisations of the formula $\varphi$ in $F$.

If $\varphi$ is a sentence in the language $\cL$, then
$$
\tilde{\varphi}:\cC\to\Set
$$
assigns to each $F\in\cC$ the truth value of $\varphi$ in $F$.

For $n>0$, the `affine space' $\A^n$ is the assignment
$$
F\mapsto F^n,
$$
associated with the trivial formula $\land_{i=1}^n (x_i=x_i)$. 

The space $\A^0$ is the constant assignment
$$
F\mapsto\{\bot,\top\}.
$$

For each formula $\varphi$, the restriction of $\tilde{\varphi}$ to $\cC^\prec$ is a functor 
$$
\tilde{\varphi}:\cC^\prec\to\Set.
$$ 

On the other hand, while $\A^n:\cC\to\Set$ is a functor, the assignment $\tilde{\varphi}:\cC\to\Set$ for an arbitrary formula $\varphi(x)$ in variables $x=x_1,\ldots,x_n$ (or a sentence) is at best a \emph{subassignment} of $\A^n$, i.e., for all $F\in \cC$, 
$$
\tilde{\varphi}(F)\subseteq \A^n(F)=F^n.
$$

\begin{definition}\label{def-set}
A subassignment $\bS$ of some $\A^n$ with $n\geq 0$ is called a \emph{definable set} if it is equivalent to the assignment $\tilde{\varphi}$ associated with some first-order formula $\varphi(x)$ in $n$ variables.
\end{definition}
We often emphasise that such sets are `parameter-free', or `defined with no parameters'.

\begin{remark}
If $\bX$ and $\bY$ are definable sets, their argument-wise cartesian product, denoted
$$
\bX\times \bY
$$
is clearly definable.

If $\bX,\bY\subseteq \A^n$ are definable, so are the following sets
$$
\bX\cap \bY,\ \ \ \bX\cup \bY,\ \ \ \bX\setminus \bY.
$$
\end{remark}

\begin{definition}\label{def-function}
A \emph{definable function} 
$$
\bpi:\bX\to \bY
$$
between definable sets $\bX$ and $\bY$ is given through a definable subset $\bGamma\subseteq \bX\times \bY$ such that, for every $F\in\cC$, $\bGamma(F)\subseteq \bX(F)\times \bY(F)$ defines a function $$\bpi_F:\bX(F)\to \bY(F).$$
\end{definition}

\begin{remark}
If $\bpi:\bX\to \bY$ is a definable function, the \emph{image} $\bpi(\bX)$ defined by
$$
\bpi(\bX)(F)=\bpi_F(\bX(F))
$$
is a definable subset of $\bY$.
\end{remark}


\begin{definition}
A \emph{definable function} 
$$
\bff:\bX\to E
$$
from a definable set $\bX$ to an arbitrary set $E$ is determined by a choice of \begin{enumerate}
\item finitely many values $e_1,\ldots,e_n\in E$, and
\item finitely many definable subsets $\bX_i\subseteq \bX$, $i=1,\ldots,n$,
\end{enumerate}
so that, for every $F\in\cC$,
$$
\bX(F)=\amalg_i \bX_i(F)
$$
and 
$\bff_F:\bX(F)\to E$ is defined by
$$
\bff_F\restriction_{\bX_i(F)}=e_i.
$$
We stipulate that, for $\bX=\A^0$, the $\bX_i$ are associated with sentences $\varphi_i$ such that, for every $F\in\cC$, 
$$
\bff_F=e_i \text{ if and only if } F\models \varphi_i
$$
\end{definition}

In the following, we discuss definable sets with \emph{parameters}. We start by a construction allowing us to choose parameters from a fixed structure in our class, and continue onto constructions of definable parameter spaces.

\begin{definition}\label{def-set-params}
Let $F\in\cC$, and let $\cL_F$ be the language obtained by adding the constant symbols for the elements of $F$ to $\cL$. Let $\cC_F$ be the subcategory of $\cC$ consisting of superstructures of $F$. 

If $\varphi(x;c)$ is a formula in the language $\cL_F$ with parameters $c=c_1,\ldots,c_m\in F$, then we can define the assignment
$$
\tilde{\varphi}_c:\cC_F\to \Set,
$$
which to each $F'\supseteq F$ assigns the set of realisations $\varphi(F',c)$ in $F'$.
\end{definition}

\begin{definition}\label{rel-def-set}
Let $\bS$ be a definable set. A \emph{definable set $\bX$ over $\bS$} (or, \emph{with parameters in $\bS$})
 is a definable function in the sense of \ref{def-function}
$$
\bpi:\bX\to \bS.
$$
A \emph{definable map of definable sets over $\bS$} is a commutative diagram
\begin{center}
 \begin{tikzpicture} 
 [cross line/.style={preaction={draw=white, -,
line width=3pt}}]
\matrix(m)[matrix of math nodes, 
inner sep=2pt, 
row sep=1em, column sep=.7em, text height=1.5ex, text depth=0.25ex,ampersand replacement=\&]
 { 
 |(gl)|{\bX} 	\&  \& |(gd)| {\bY}    \\
\&        |(d)|{\bS}  \&\\};
\path[->,font=\scriptsize,>=to, thin]
(gl) edge   (gd) edge  (d)
(gd) edge  (d) 
;
\end{tikzpicture}
\end{center}
\end{definition}

\begin{definition}
Given definable sets $\bpi:\bX\to \bS$ and $\bpi':\bX'\to \bS$ over $\bS$,  their \emph{fibre product} $\bX\times_{\bS}\bX'$ is
\begin{multline*}
\bX\times_{\bS}\bX'(F)=\bX(F)\times_{\bS(F)}\bX'(F)=\\ 
\{(x,x')\in \bX(F)\times \bX'(S):\bpi(x)=\bpi'(x')\},
\end{multline*}
and it is again a definable set over $\bS$.

Given $s\in \bS(F)$ for some $F\in\cC$, We can consider the singleton $\{s\}$ as a definable set with parameters from $F$ over $\bS$, so we obtain a definable set 
$$
\bX_s=\bX\times_{\bS}\{s\}:\cC_F\to\Set,
$$
which is called the \emph{fibre of $\bX$ over $\bS$ with parameter $s$}, and it is a definable set over $F$ in the sense of \ref{def-set-params} via
$$
\bX_s(F')=\bpi_{F'}^{-1}(s),
$$
for every $F'$ in $\cC_F$. 
\end{definition}

\begin{remark}
By the above, a definable set $\bX$ over $\bS$ gives rise to a family of definable sets $\bX_s$ parametrised by parameters $s$ from $\bS$. 

In model theory, this object is usually called a \emph{uniformly definable family of definable sets}. The reason for our specific formulation is the additional precision needed to treat parameters over a family of structures, as opposed to working in a fixed `monster model', which is the most familiar setting for model theory. 

Note, if $\bX$ is a definable set, it can naturally be considered as a definable set over $\A^0$. Indeed, we consider the definable map which takes $\bX(F)$ to $\top$ if and only if $\bX(F)\neq\emptyset$.

\end{remark}

\begin{definition}
Let $\bX$ be a definable set over $\bS$ and let $E$ be a set. A \emph{definable function
$$
\bff:\bX\to E
$$
over $\bS$} on a class of structures $\cC$ is determined by a choice of
\begin{enumerate}
\item finitely many definable functions $\be_1,\ldots,\be_n:\bS\to E$, and
\item finitely many definable sets $\bX_1,\ldots,\bX_n$ over $\bS$
\end{enumerate}
such that, for every $F\in\cC$, every $s\in\bS(F)$, we have
$$
\bX_{1,s}\amalg\cdots\amalg\bX_{n,s}=\bX_s,
$$
and 
$$
\bff_s:\bX_s\to E 
$$
is given on $\cC_F$ by
$$
\bff_{s,F'}\restriction_{\bX_{i,s}(F')}=\be_i(s).
$$
\end{definition}

\subsection{Counting and asymptotic classes}

\begin{definition}\label{CDM}
Let $\cC$ be a class of finite structures (considered a category with substructure embeddings).
We say that $\cC$ is a \emph{CDM-class}, if, for every definable set $\bX$ over $\bS$, there exist
\begin{enumerate}
\item a definable function $\bmu_{\bX}:\bS\to \Q$,
\item a definable function $\dd_{\bX}:\bS\to \N$,
\item a constant $C_{\bX}>0$,
\end{enumerate}
so that, for every $F\in\cC$ and every $s\in \bS(F)$,  
$$
\left| |\bX_s(F)|-\bmu_{\bX}(s)|F|^{\dd_{\bX}(s)}\right|\leq C_{\bX} |F|^{\dd_{\bX}(s)-1/2}. 
$$
\end{definition}

\begin{definition}\label{asymp-cl-def}
Let $\cC$ be a class of finite structures (considered a category with substructure embeddings).
We say that $\cC$ is an \emph{asymptotic class} (in the sense of \cite{dugald-charlie} and \cite{dugald-richard}), if, for every definable set $\bX$ over $\bS$, there exist
\begin{enumerate}
\item a definable function $\bmu_{\bX}:\bS\to \Q$,
\item a definable function $\dd_{\bX}:\bS\to \N$,
\end{enumerate}
so that, for every $\epsilon>0$ there exists a constant $N>0$ such that for every $F\in\cC$ with $|F|>N$ and every $s\in \bS(F)$,  
$$
\left| |\bX_s(F)|-\bmu_{\bX}(s)|F|^{\dd_{\bX}(s)}\right|\leq \epsilon |F|^{\dd_{\bX}(s)}. 
$$
\end{definition}

\begin{definition}\label{relative-cdm}
Let $\cC$ be a class of structures with a given function
$$
\chi:\cC\to \N.
$$
We say that $\cC$ is a \emph{CDM-class relative to $\chi$}, if for every definable 
set $\bX$ over $\bS$, there exist
\begin{enumerate}
\item a definable function $\bmu_{\bX}:\bS\to \Q\cup\{\infty\}$,
\item a definable function $\dd_{\bX}:\bS\to \N\cup\{\infty\}$,
\item a constant $C_{\bX}>0$,
\end{enumerate}
so that, for every $F\in\cC$ and every $s\in \bS(F)$,  
$$
\left| |\bX_s(F)|-\bmu_{\bX}(s)\chi(F)^{\dd_{\bX}(s)}\right|\leq C_{\bX} \chi(F)^{\dd_{\bX}(s)-1/2},
$$
where we stipulate that $\bmu_{\bX}(s)<\infty$ is and only if $\dd_{\bX}(s)<\infty$.

We define an \emph{asymptotic class relative to $\chi$} analogously, to reflect the error term from \ref{asymp-cl-def}.
\end{definition}

\begin{terminology}\label{fin-reldim}
With notation of \ref{relative-cdm}, we may informally refer to the number $\bmu_{\bX}(s)$ as \emph{measure}, and to the number $\dd_{\bX}(s)$ as \emph{dimension}.

We say that $\bX\to \bS$ is of \emph{finite relative dimension}, provided $\dd_{\bX}$ maps into $\N$.
\end{terminology}

\begin{remark}\label{actual-use-cdm}
Suppose $\bX$ is of finite relative dimension over $\bS$, over a class $\cC$ that is an asymptotic class relative to $\chi$. For each definable subset $\bY$ of $\bX$ over $\bS$, we obtain a definable \emph{probability function}
$$
\bmu_{\bY/\bX}:\bS\to \Q, \ \ \ \ \bmu_{\bY/\bX}(s)=
\begin{cases} 
\frac{\bmu_{\bY}(s)}{\bmu_{\bX}(s)}& \mbox{if }\dd_{\bY}(s)=\dd_{\bX}(s)\\
0 &\mbox{otherwise}.
\end{cases}
$$
By the above definitions, for $F$ ranging over $\cC$ and $s\in \bS(F)$, the expressions 
$$
\frac{|\bY_s(F)|}{|\bX_s(F)|}-\bmu_{\bY/\bX}(s)
$$
are 
$o(1)$ in $\chi(F)$, i.e., converge to 0 as $\chi(F)\to\infty$. If $\cC$ is a CDM-class relative to $\chi$, then the above expressions are 
$O(\chi(F)^{-1/2})$.

 If $\bY$ is lower-dimensional than $\bX$, then the above expressions are $O(\chi(F)^{-1})$ .
\end{remark}

\begin{remark}
A (relative) asymptotic class is clearly a (relative) CDM-class. 
A CDM-class is a relative CDM-class of finite structures with respect to the cardinality function such that all definable sets are finite dimensional. Similarly, an asymptotic class is a relative asymptotic class of finite structures with respect to the cardinality function such that all definable sets are finite dimensional. 
\end{remark}

\begin{example}
The class of finite fields was shown to be a CDM-class in the foundational paper \cite{CDM} by Chatzidakis-van den Dries-Macintyre. 

Following a discussion of difference fields in Section~\ref{sect:frob}, we will explain in \ref{ac-with-frob} how fields with powers of Frobenius constitute a relative CDM-class by the main theorem of \cite{ive-mark}.

For further examples of CDM-classes and a discussion of differences between CDM and asymptotic classes we refer the reader to \cite{dugald-charlie}.
\end{example}

\section{A weak regularity lemma}

In this section we show that an iterate of a graphon can be regularised in infinity norm. It is a general result for graphons and, although it does not use definability, it will serve as an important first step in the proof of our main theorem later. 

\begin{lemma}\label{wk6-reg}
Let $W$ be a graphon. For every $\epsilon\in (0,1)$ there exists a stepfunction $W'$ with $N(\epsilon)\leq (3/\epsilon^3)^{(1/\epsilon^2)}$ steps such that, writing $W^6=W\circ W^*\circ W\circ W^*\circ W\circ W^*$,
$$
\norm{W^6-W'}_\infty\leq 2\epsilon^2.
$$
\end{lemma}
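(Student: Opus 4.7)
The plan is to exploit the spectral structure of $W^6$: writing $T = T_W$, we have $T_{W^6} = (T T^*)^3$, a compact, self-adjoint, positive operator on $L^2[0,1]$. Its singular value decomposition gives
\[
W^6(x,y) = \sum_i \sigma_i^6 \, v_i(x) v_i(y),
\]
where $\sigma_i$ are the singular values of $T$ (so $\sum_i \sigma_i^2 = \norm{W}_2^2 \le 1$) and the $v_i$ are the corresponding left singular vectors. The sixth power, rather than a smaller one, is crucial: it supplies enough factors of $\sigma_i$ both to dominate the tail and to absorb the unboundedness of the raw eigenfunctions.

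The quantitative observation is that $h_i := \sigma_i v_i$ satisfies $\norm{h_i}_\infty \le 1$: indeed $\sigma_i v_i(x) = \int_0^1 W(x,z) u_i(z)\,dz$, where $u_i$ are the right singular vectors, and $\norm{W}_\infty \le 1$ together with $\norm{u_i}_1 \le \norm{u_i}_2 = 1$ yields the bound. Rewriting
\[
W^6(x,y) = \sum_i \sigma_i^4 \, h_i(x) h_i(y),
\]
the pointwise tail beyond the top $N$ singular values is bounded by $\sum_{i > N} \sigma_i^4 \le \sigma_{N+1}^2 \sum_i \sigma_i^2 \le 1/(N+1)$. Choosing $N = \lceil 1/\epsilon^2 \rceil$ makes this tail at most $\epsilon^2$ uniformly.

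For the remaining finite sum, I would consider the vector-valued map $H:[0,1] \to [-1,1]^N$ given by $H(x) = (h_1(x),\dots,h_N(x))$. Subdividing $[-1,1]^N$ into cubes of side $\delta$ and pulling back via $H$ partitions $[0,1]$ into at most $\lceil 2/\delta \rceil^N$ measurable pieces on which each $h_i$ oscillates by at most $\delta$. Using the same partition in both variables and replacing each product $h_i(x)h_i(y)$ by a constant on every product cell yields a stepfunction $W'$. Since $\norm{h_i}_\infty \le 1$, the resulting pointwise error on the truncated sum is at most $2\delta \sum_{i \le N} \sigma_i^4 \le 2\delta$. Setting $\delta = \epsilon^2/2$ balances the two errors to give total discrepancy at most $2\epsilon^2$, with a step count of order $(4/\epsilon^2)^{1/\epsilon^2}$, well within the stated bound.

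The main obstacle is obtaining genuine $L^\infty$ control on the truncation tail: the Hilbert--Schmidt decomposition a priori converges only in $L^2$, so one needs a pointwise bound. The observation $\norm{\sigma_i v_i}_\infty \le 1$ is what saves the day---it costs one factor of $\sigma_i$ per variable, which is precisely why the statement requires the sixth power rather than, say, $W \circ W^*$ alone. Once this is in place, the quantisation of the top eigenvectors and the tracking of constants are entirely routine.
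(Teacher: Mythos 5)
Your proposal is correct and follows essentially the same approach as the paper: singular value decomposition of $T_W$, truncation to the top $\sim 1/\epsilon^2$ singular values using $\sum_i\sigma_i^2\le 1$, and quantisation of the eigenfunctions into finitely many level sets, with the uniform bound $\norm{\sigma_i y_i}_\infty\le 1$ supplying the needed $L^\infty$ control. Your reformulation --- absorbing one factor of $\sigma_i$ into each eigenfunction to get $h_i=\sigma_i y_i$ with $\norm{h_i}_\infty\le 1$, and bounding the kernel $W^6(x,y)=\sum_i\sigma_i^4 h_i(x)h_i(y)$ pointwise rather than passing through operator bounds $\norm{Bg}_\infty\le\epsilon^2\norm{g}_1$ --- is a modest streamlining of the same argument, not a different route.
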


\begin{proof} We reformulate the first part of the proof of \cite[Lemma~3]{tao-blog} in the language of graphons.

Let $T=T_W:L^2[0,1]\to L^2[0,1]$ be the kernel operator 
$$
T(f)(v)=\int_{[0,1]}W(u,v)f(u)\,du
$$
associated with the graphon $W$. Its adjoint $T^*$ is given by
$$
T^*(g)(u)=\int_{[0,1]}W(u,v)g(v)\,dv.
$$

Cauchy-Schwarz inequality yields that for all $f\in L^2([0,1])$,
\begin{equation}\label{tao5} 
\norm{Tf}_2\leq\norm{Tf}_\infty\leq\norm{f}_2, 
\end{equation}
and similarly, for $g\in L^2[0,1]$,
\begin{equation}\label{tao6} 
\norm{T^*g}_2\leq\norm{T^*g}_\infty\leq\norm{g}_2.
\end{equation}
We can apply the singular value decomposition to the Hilbert-Schmidt operator $T$, which gives
$$
Tf=\sum_i\sigma_i\langle f,u_i\rangle_2 y_i
$$
and
$$
T^*g=\sum_i\sigma_i\langle g,y_i\rangle_2u_i
$$
for some sequence $\sigma_i$ of singular values with $\sigma_1\geq\sigma_2\geq\cdots>0$, where $u_i$ and $y_i$ are orthonormal systems in $L^2[0,1]$. 

The operator $TT^*:L^2[0,1]\to L^2[0,1]$ can be diagonalised as
$$
TT^* g=\sum_i\sigma_i^2\langle g,y_i\rangle_2 y_i,
$$
whence
$$
\tr(TT^*)=\sum_i\sigma_i^2.
$$
On the other hand, we obtain the Hilbert-Schmidt norm bound
\begin{equation}\label{tao7}
\sum_i\sigma_i^2=\tr(TT^*)=\int |W(x,y)|^2\, dxdy\leq 1.
\end{equation}
Using 
$$
y_i=\frac{1}{\sigma_i}Tu_i, \ \ \ u_i=\frac{1}{\sigma_i}T^*y_i,
$$
as well as (\ref{tao5}), (\ref{tao6}), we obtain
\begin{equation}\label{tao8}
\norm{y_i}_\infty\leq\frac{1}{\sigma_i}, \ \ \text{ and } \ \ \ \norm{u_i}_\infty\leq\frac{1}{\sigma_i}.
\end{equation}

We use these bounds to find a low rank approximation to the sixth power 
$$
TT^*TT^*TT^*:L^2[0,1]\to L^2[0,1].
$$
Intuitively, taking a high power `tames' any unpredictable behaviour of $T$ and produces a more manageable operator. 

The above operator can be diagonalised as 
\begin{equation}\label{tao9}
TT^*TT^*TT^* g=\sum_i\sigma_i^6\langle g,y_i\rangle_2 y_i.
\end{equation}

Given an $\epsilon>0$, we split
$$
TT^*TT^*TT^*=A_{\epsilon}+B_{\epsilon},
$$
where $A=A_{\epsilon}$ is a low rank operator
$$
Ag=\sum_{i:\sigma_i\geq\epsilon}\sigma_i^6 \langle g,y_i\rangle_2 y_i,
$$
and $B=B_{\epsilon}$ is the error term
$$
Bg=\sum_{i:\sigma_i<\epsilon}\sigma_i^6 \langle g,y_i\rangle_2 y_i,
$$
Using the triangle inequality, H\"older's inequality \ref{holder-ineq} and (\ref{tao8}), (\ref{tao7}), for any $g\in L^1[0,1]$, 
\begin{align*}
\norm{Bg}_\infty & \leq \sum_{i:\sigma_i<\epsilon}\sigma_i^6 |\langle g,y_i\rangle_2|\frac{1}{\sigma_i}\leq 
\sum_{i:\sigma_i<\epsilon}\sigma_i^6 \norm{g}_1\norm{y_i}_\infty\frac{1}{\sigma_i}
\\ 
&\leq \sum_{i:\sigma_i<\epsilon}\sigma_i^6 \norm{g}_1\frac{1}{\sigma_i^2} \leq \epsilon^2\sum_i\sigma_i^2\norm{g}_1\leq \epsilon^2\norm{g}_1.
\end{align*}
Let $$
\delta=\frac{\epsilon^2}{3}.
$$
Using (\ref{tao8}), 
we discretise 
$$
y_i=y'_{i,\epsilon}+e_{i,\epsilon},
$$
where $y_i'=y'_{i,\epsilon}$ takes at most $1/\sigma_i\delta$ values, $\norm{y_i'}_\infty\leq\norm{y_i}_\infty$, and $e_i$ is bounded in magnitude by $\delta$.
We split $A_{\epsilon}=A'_{\epsilon}+E_{\epsilon}$, where 
$$
A'g=\sum_{i:\sigma_i\geq\epsilon}\sigma_i^6 \langle g,y'_i\rangle_2 y'_i
$$
and
$$
Eg=\sum_{i:\sigma_i\geq\epsilon}\sigma_i^6 \left(\langle g,y'_i\rangle_2 e_i+\langle g,e_i\rangle_2 y_i'+\langle g,e_i\rangle_2 e_i\right).
$$
By the choice of $\delta$, 
and arguments analogous to the above, we get that
\begin{align*}
\norm{Eg}_\infty & \leq \sum_{i:\sigma_i\geq\epsilon}\sigma_i^6\left(\norm{g}_1\norm{y_i'}_\infty\norm{e_i}_\infty+\norm{g}_1\norm{e_i}_\infty\norm{y_i'}_\infty+\norm{g}_1\norm{e_i}_\infty^2\right)\\
& \leq \norm{g}_1 \sum_{i:\sigma_i\geq\epsilon} \sigma_i^6\,\left(2\frac{\delta}{\sigma_i}+\delta^2\right)
\leq \norm{g}_1 \sum_{i:\sigma_i\geq\epsilon} 2\sigma_i^5\delta+\sigma_i^6\delta^2
\leq \epsilon^2\norm{g}_1.
\end{align*}
%
Thus, we have decomposed
$$
TT^*TT^*TT^*=A'_{\epsilon}+E'_{\epsilon},
$$
where $A'$ is of `low rank', and $E'=E+B$ has integral kernel bounded pointwise by $2\epsilon^2$.

Using (\ref{tao7}), the number of summands in the definition of $A$ is at most $1/\epsilon^2$. We partition 
$$
[0,1]=V_1\cup\ldots\cup V_n,
$$ 
where $V_j=V_{j,\epsilon}$ are the intersections of the level sets of the $y_i'$ (removing any empty cells to ensure the $V_j$ are all non-empty), and 
$$
n=N(\epsilon)\leq (1/\epsilon\delta)^{(1/{\epsilon^2})}=(3/\epsilon^3)^{(1/\epsilon^2)}.
$$
The (sought-after) integral kernel $W'$ of $A'$ is constant on each $V_j\times V_k$, so the integral kernel of
$TT^*TT^*TT^*$ fluctuates by at most $2\epsilon^2$ on each $V_j\times V_k$.
\end{proof}

\section{Algebraic regularity lemma}


\begin{definition}
A \emph{definable bipartite graph over $\bS$} is a triple $\bGamma=(\bU,\bV,\bE)$, where 
 $\bU$, $\bV$, $\bE\subseteq \bU\times \bV$ are definable sets over $\bS$. For each $F\in\cC$ and each point $s\in \bS(F)$, 
we obtain a bipartite graph
$$
\bGamma_s=\bGamma_s(F)=(\bU_s(F),\bV_s(F),\bE_s(F)).
$$
\end{definition}

We are interested in describing the (limit) behaviour of the graphs $\bGamma_s(F)$ as $F$ and $s$ vary over an asymptotic class $\cC$.

\begin{definition}
Let $\bU$, $\bV$ be definable sets over $\bS$. A  \emph{definable stepfunction} on $\bU\times\bV$ over $\bS$ is a definable function $\bW:\bU\times \bV\to\R$ over $\bS$ such that there exist
definable sets $\bU_1, \ldots, \bU_m$ and  $\bV_1,\ldots, \bV_n$ over $\bS$ so that, for each $F\in\cC$ and each $s\in \bS(F)$,
$$
\bU_s=\amalg_{i=1}^n \bU_{i,s}\ \ \ \ \text{ and }\ \ \ \ \ \bV_s=\amalg_{j=1}^m \bV_{j,s},
$$
and
$\bW_s$ is constant on each $\bU_{i,s}\times\bV_{j,s}$.

For $F\in\cC$ and $s\in\bS(F)$, by a slight abuse of notation, we will often identify the weighted graph  $\bW_s(F)$ with its associated stepfunction $W(\bW_s(F))\in\W_0$.
\end{definition}

\begin{lemma}\label{acc-pts-def-stepf}
Let $\bW$ be a definable stepfunction on $\bU\times\bV$ over $\bS$ of finite relative dimension on an asymptotic class $\cC$ relative to $\chi$.
The set of accumulation points of the set
$$
\{\bW_s(F): F\in\cC,\, s\in\bS(F)\}
$$
in the space $\tilde{\W_0}$ of graphons  is a finite set of graphons represented by stepfunctions.
\end{lemma}
\begin{proof}
By definition, there exist definable sets $\bU_1, \ldots, \bU_m$ and  $\bV_1,\ldots, \bV_n$ over $\bS$ partitioning $\bU$ and $\bV$ and definable functions $\be_{ij}:\bS\to [0,1]$ so that
$$
\bW_s\restriction_{\bU_{i,s}\times\bV_{j,s}}=\be_{ij}(s).
$$ 
Using \ref{actual-use-cdm} and the fact that we are dealing with the finite relative dimension, there exist definable functions 
$$
\bmu_{\bU_i/\bU}:\bS\to \Q\ \ \ \text{ and }\ \ \ \ \bmu_{\bV_j/\bV}:\bS\to \Q$$ 
$$
\frac{|\bU_{i,s}(F)|}{|\bU_{s}(F)|}-\mu_{\bU_i/\bU}(s)=o(1),
$$
and analogously for $\bV_j$.

We can partition 
$$
\bS=\bS_1\amalg\cdots\amalg \bS_r
$$
so that, for all $s\in \bS_l(F)$, 
$$
\be_{ij}(s)=e_{ijl}\in [0,1],\ \ \
\bmu_{\bU_i/\bU}(s)=\mu_{il}\in\Q,\ \ \ 
\bmu_{\bV_j/\bV}(s)=\nu_{jl}\in\Q.
$$
Thus, using \ref{actual-use-cdm}, up to isomorphism, the stepfunction associated to $\bW_s(F)$ through \ref{assoc-stepfun} is within 
$o(1)$ from a stepfunction associated to a weighted bipartite graph with vertex weights $\{\mu_{il}:i\}$ and $\{\nu_{jl}:j\}$ and edge weights $\{e_{ijl}:i,j\}$ in $\norm{\cdot}_1$-norm, and hence, by Fact \ref{fact:estimate}, in the cut norm. 

In fact, the net of graphons associated to the graphs $\{\bW_s(F):s\in \bS_l(F)\}$ with respect to the preorder  induced by $\chi(F)$ has a stepfunction as a limit in the  $\norm{\cdot}_1$-norm.
\end{proof}

%
%

\begin{theorem}[Tao's algebraic regularity lemma for asymptotic classes]\label{reg-asymp}
Let $\bGamma=(\bU,\bV,\bE)$ be a definable bipartite graph of finite relative dimension over a definable set $\bS$ on an asymptotic class $\cC$ relative to $\chi$. 
Then there exists a definable set $\tilde{\bS}$ over $\bS$ and a definable stepfunction $\bW$ over $\tilde{\bS}$ such that for every $\varepsilon>0$, there exists an $M>0$ such that for every $F\in\cC$ with $\chi(F)\geq M$, every $\tilde{s}\in \tilde{\bS}(F)$ mapping onto $s\in \bS(F)$, 
$$
d_\square(\bGamma_s(F),\bW_{\tilde{s}}(F))\leq\varepsilon.
$$ 
\end{theorem}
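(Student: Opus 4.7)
The plan is to combine the weak regularity Lemma \ref{wk6-reg} with the definable counting afforded by the asymptotic class hypothesis. Lemma \ref{wk6-reg}, applied to the graphon $W(\bGamma_s(F))$ for each $F$ and $s$, produces a partition of the vertex sets into boundedly many cells on which the average-density stepfunction approximates the graph in the cut metric; the task is to realise such a partition \emph{definably} and uniformly in $F$ and $s$, at the cost of augmenting $\bS$ with finitely many pivot-vertex parameters packaged into $\tilde{\bS}$.

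First, I would unpack the combinatorial content of the partition produced by Lemma \ref{wk6-reg}. Since $T_W T_W^*$ is the common-neighbour operator on $\bU$, the cells $V_i$ appearing in its proof are level sets of finite-precision discretisations of the leading singular vectors of $T_W$, so two $u,u'\in\bU_s(F)$ lie in the same cell precisely when they have approximately equal common-neighbour counts against a bounded family of subsets of $\bV$. A standard Frieze--Kannan style sampling argument then shows that for a generic tuple $\bar v=(v_1,\ldots,v_N)\in\bV_s(F)^N$ with $N=N(\varepsilon)$, the partition of $\bU_s(F)$ by the adjacency-pattern map $u\mapsto(\mathbf{1}_{\bE}(u,v_j))_{j\leq N}\in\{0,1\}^N$ refines the weak-regularity partition up to $L^2$-error $\varepsilon$, and symmetrically a tuple $\bar u\in\bU_s(F)^N$ provides a refining partition of $\bV_s$. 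Converting the $L^\infty$-control of $W^6$ supplied by Lemma \ref{wk6-reg} into cut-norm control of $W$ by the corresponding edge-density stepfunction uses the same Cauchy--Schwarz bookkeeping that drives the proof of that lemma.

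Second, I would encode this partitioning definably. Let $\tilde{\bS}_0$ be the fibre product $\bS\times\bU^N\times\bV^N$ over $\bS$, and for $\tilde s=(s,\bar u,\bar v)\in\tilde{\bS}_0(F)$ and $I\subseteq\{1,\ldots,N\}$ let $\bU_I\subseteq\bU_s$ be the set of $u$ with $(u,v_j)\in\bE$ iff $j\in I$, and symmetrically $\bV_J\subseteq\bV_s$. These $2^N$ pieces are definable over $\tilde{\bS}_0$ and give a definable partition of $\bU$ and $\bV$. Applying the asymptotic class hypothesis to each of $\bU_I$, $\bV_J$, and $\bE\cap(\bU_I\times\bV_J)$ produces definable measure and dimension functions, whose quotient is a definable edge-density $\be_{IJ}:\tilde{\bS}_0\to[0,1]$; after a preliminary refinement of $\tilde{\bS}_0$ into finitely many pieces on which the dimensions are constant and the relevant measures are non-vanishing, the stepfunction $\bW_{\tilde s}$ taking value $\be_{IJ}(\tilde s)$ on $\bU_I\times\bV_J$ is a definable stepfunction over $\tilde{\bS}_0$.

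The main obstacle is to restrict $\tilde{\bS}_0$ to a definable sub-family $\tilde{\bS}$ of \emph{good} pivot tuples — those for which the associated stepfunction is within $\varepsilon$ of $\bGamma_s(F)$ in cut metric — and to check that the fibres of $\tilde{\bS}\to\bS$ are non-empty for all $|F|\geq M$. By Definition \ref{hom-reg}(2) goodness is equivalent to uniform $\varepsilon$-regularity of every block $(\bU_I,\bV_J)$, and hence reduces to finitely many approximate equalities among counts of definable sets; the asymptotic class hypothesis rewrites these as inequalities between definable functions on $\tilde{\bS}_0$, carving out $\tilde{\bS}$ definably. Non-emptiness of the fibres follows from the Frieze--Kannan sampling step: a positive proportion of pivot tuples in $\bU_s(F)^N\times\bV_s(F)^N$ are good, and this proportion is uniformly bounded below for $|F|\geq M$ because the asymptotic class hypothesis expresses it through definable measure data. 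This final ``random pivots suffice, definably'' step is the quantitatively delicate heart of the argument.
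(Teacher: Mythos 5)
Your proposal diverges from the paper's argument in a way that introduces a genuine gap. The theorem's quantifier structure fixes $\tilde{\bS}$ and $\bW$ \emph{before} $\varepsilon$: a single definable stepfunction over a single definable parameter space must work for every $\varepsilon$ once $|F|$ is large enough. Your construction, however, takes $N=N(\varepsilon)$ pivot vertices and sets $\tilde{\bS}_0=\bS\times\bU^N\times\bV^N$ with a $2^N$-cell adjacency-pattern partition. Both the parameter space and the number of steps of $\bW$ then grow as $\varepsilon\to 0$, which does not prove the stated theorem; it proves the weaker statement in which $\tilde{\bS}$ and $\bW$ are permitted to depend on $\varepsilon$. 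The paper avoids this by exploiting the algebraic structure directly: the integral kernel $K_3$ of $T T^* T T^* T T^*$ is counted by a definable set and hence, via the CDM/asymptotic bounds, agrees up to $O(|F|^{-1/2})$ with a definable function $\bc(v,v',s)$ whose image is \emph{finite}. The minimal gap $\delta>0$ between those finitely many values fixes a single $\epsilon$ with $2\epsilon^2<\delta/2$ once and for all; the weak regularity lemma for this fixed $\epsilon$ then forces any approximate cell structure to snap onto the definable one, and the resulting stepfunction (with a bounded, $\varepsilon$-independent number of steps) approximates $\bGamma_s(F)$ with error tending to $0$ as $|F|\to\infty$. The finiteness of the value set of $\bc$ is the crucial input you are missing.

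A secondary but substantive difference: the paper's cells are level sets $\{v:\bigwedge_j\bc(v,v_j,s)=\bc(v_i,v_j,s)\}$ of the approximate common-neighbour kernel, not adjacency-pattern classes $\{u:(u,v_j)\in\bE \Leftrightarrow j\in I\}$. Your claim that the weak-regularity partition from Lemma~\ref{wk6-reg} is ``precisely'' given by approximate common-neighbour counts, and that a generic adjacency-pattern partition refines it up to small $L^2$ error, is asserted rather than proved, and it is exactly where the spectral content lives. The passage from $L^\infty$-control of $W^6$ to cut-norm control of $W$ in the paper goes through the near-constancy of $K_3$ on cells, which yields $\|T^*g\|_2\ll\|g\|_2\cdot o(1)$ for mean-zero $g$ supported on a cell via H\"older's inequality applied to the singular values; adjacency-pattern cells would need a separate argument to recover this operator estimate. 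I would recommend recasting your partition in terms of a definable approximation to $K_3$ and letting the finiteness of its value set, rather than an $\varepsilon$-dependent pivot count, control the combinatorics.
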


\begin{theorem}[Tao's algebraic regularity lemma]\label{reg-cdm}
With assumptions from \ref{reg-asymp}, suppose that $\cC$ is a CDM-class relative to $\chi$. 
There exists a constant $M=M(\bGamma)>0$, a definable set $\tilde{\bS}$ over $\bS$ and a 
definable stepfunction $\bW$ over $\tilde{\bS}$ such that for every $F\in\cC$, every $\tilde{s}\in \tilde{\bS}(F)$ mapping onto $s\in \bS(F)$, 
$$
d_\square(\bGamma_s(F),\bW_{\tilde{s}}(F))\leq M\chi(F)^{-1/12}.
$$ 
\end{theorem}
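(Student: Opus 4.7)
The plan is to combine the weak regularity Lemma~\ref{wk6-reg} applied to each realisation $W_{s,F} = W(\bGamma_s(F))$ with the CDM hypothesis in order to produce a definable stepfunction approximation with explicit rate. The weak lemma yields a stepfunction approximating the sixth-power kernel $W_{s,F}^6 = WW^*WW^*WW^*$ in $L^\infty$, but the partition it uses is defined analytically via level sets of discretised singular functions and is not definable. The CDM property lets us replace this analytic partition with a definable one, at the cost of extending $\bS$ to a definable $\tilde{\bS}$ naming the resulting strata.

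\textbf{Step 1 (counting walks).} The entry $W_{s,F}^6(x,y)$ is, up to CDM-controlled normalisation by $|\bU_s(F)|$ and $|\bV_s(F)|$, the count of six-walks $x \to v_1 \leftarrow u_1 \to v_2 \leftarrow u_2 \to v_3 \leftarrow y$ in $\bGamma_s(F)$. These walks form the fibres of a single definable set $\bX \to \bS \times \bU \times \bV$ constructed by iterating $\bE$ and its transpose. By Definition~\ref{CDM}, there exist definable functions $\bmu_{\bX}, \dd_{\bX}$ and a constant $C$ with
$$
\left| |\bX_{s,x,y}(F)| - \bmu_{\bX}(s,x,y)|F|^{\dd_{\bX}(s,x,y)} \right| \leq C|F|^{\dd_{\bX}(s,x,y) - 1/2}.
$$
Since $\bmu_{\bX}$ and $\dd_{\bX}$ take only finitely many values, they induce a definable partition of $\bU \times \bV$ over $\bS$ into finitely many rectangles on which the leading term of $W_{s,F}^6$ is constant up to $O(|F|^{-1/2})$.

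\textbf{Step 2 (definable stepfunction).} After enlarging $\bS$ to $\tilde{\bS}$ to name the pieces (for instance, by choosing representatives from each stratum), one obtains definable partitions $\bU = \amalg_i \bU_i$ and $\bV = \amalg_j \bV_j$ over $\tilde{\bS}$. On each rectangle $\bU_i \times \bV_j$, define $\bW$ to be the actual edge density of $\bE$; this is a definable function of $\tilde{s}$ by a further application of CDM to the counting set $\bE \cap (\bU_i \times \bV_j)$. The cut-metric bound is then attacked by comparing the two estimates: Lemma~\ref{wk6-reg} with parameter $\epsilon$ produces a (non-definable) stepfunction $A'$ with $\|W_{s,F}^6 - A'\|_\infty \leq 2\epsilon^2$, while Step~1 controls the leading behaviour of $W_{s,F}^6$ on the definable rectangles up to $O(|F|^{-1/2})$. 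Combining these gives control on the fluctuation of $W_{s,F}$ on each rectangle, which through a singular-value estimate (extracting a root to pass from $W^6$ back to $W$) translates into the cut-norm bound. Balancing $\epsilon^2$ against $|F|^{-1/2}$ and the root produces the exponent $1/12$.

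\textbf{Main obstacle.} The main difficulty is reconciling the two partitions: one must show that the definable partition coming from CDM is a sufficiently fine refinement of, or at least compatible with, the analytic partition produced by the singular value decomposition in Lemma~\ref{wk6-reg}. In effect, vertices belonging to different walk-count strata have to be spectrally separated, so that approximating $W^6$ is tantamount to approximating the walk-count function up to CDM error. Establishing this compatibility is the model-theoretic heart of Tao's algebraic regularity lemma and is where the exponent $1/12$ (rather than the more naive $1/2$ arising directly from CDM) is forced by the transfer from $W^6$ back to $W$.
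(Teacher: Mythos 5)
Your outline is broadly the paper's strategy (weak regularity via Lemma~\ref{wk6-reg}, walk-counting through a definable set, CDM to control the six-walk kernel, representatives from the cells as new parameters over $\tilde{\bS}$, a spectral transfer from $W^6$ back to $W$), but there is a genuine gap precisely at the step you label the ``main obstacle'': you identify that the analytic partition of Lemma~\ref{wk6-reg} must be reconciled with a definable one, and then leave this unresolved. In the paper this reconciliation is the argument, not an assumption. The mechanism is as follows. The CDM estimate produces a definable function $\bc(v,v',s)$ on $\bV\times\bV$ over $\bS$ (not on $\bU\times\bV$ --- the operator $TT^*TT^*TT^*$ acts on the $V$-side, and the kernel $K_3$ depends on a pair of $V$-vertices, approximated by $\bc$ up to $O(|F|^{-1/2})$). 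Crucially, $\bc$ takes only finitely many values, so those values are separated by a fixed $\delta>0$. One then chooses $\epsilon$ once and for all so that $2\epsilon^2<\delta/2$. Applying Lemma~\ref{wk6-reg} with this $\epsilon$, the resulting (non-definable) cells $V_i$ have the property that $K_3$ fluctuates by less than $\delta/2$ on each $V_i\times V_j$; combined with the CDM estimate this pins $K_3$ restricted to $V_i\times V_j$ to within $O(|F|^{-1/2})$ of a single value $\gamma_{ij}$ of $\bc$. This is what makes the analytic cell $V_i$ definable: fixing a representative $v_i\in V_i$ for each $i$, the set $\{v:\bigwedge_j\bc(v,v_j,s)=\gamma_{ij}\}$ coincides with $V_i$ (up to a permutation preserving the reduced array $\gamma$), so the tuple $s'=(v_1,\ldots,v_n,s)$ furnishes the parameter in $\tilde{\bS}$. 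Without the ``discrete range with gap $\delta$'' observation and the choice of $\epsilon$ relative to $\delta$, there is no reason the analytic and definable partitions should be compatible, and the claim does not follow.

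Two smaller inaccuracies. First, the exponent $1/12$ does not arise from ``balancing $\epsilon^2$ against $|F|^{-1/2}$'': $\epsilon$ is a fixed constant determined by $\delta$, not a quantity tending to zero with $|F|$. The $1/12$ comes entirely from the chain $q^{-1/2}\to q^{-1/6}\to q^{-1/12}$: the CDM error is $q^{-1/2}$, a H\"older interpolation between the bounds $\sum\sigma_i^6|\langle g,y_i\rangle|^2\ll q^{-1/2}\norm{g}_2^2$ and $\sum|\langle g,y_i\rangle|^2\le\norm{g}_2^2$ (Lemma~\ref{holder-lemma}) gives $\norm{T^*g}_2^2\ll q^{-1/6}\norm{g}_2^2$, and taking the square root gives $q^{-1/12}$. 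Second, you omit the distinction between large (top-dimensional) and small cells in the definable partition: the CDM estimate only gives positive relative density on the top-dimensional strata, and the small strata are absorbed into the error term in the final cut-norm computation (this is the role of conditions (\ref{largecells}) and (\ref{smallcells}) in the paper's proof). Both issues should be repaired for the argument to close.
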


We will prove Theorem~\ref{reg-cdm} because of the more interesting/precise analysis of the error term, and the proof of Theorem~\ref{reg-asymp} follows along the same lines. We follow the ideas from the proof of \cite[Lemma~3]{tao-blog}. 
\begin{proof} For simplicity of notation, we will write the proof for a CDM-class. Note that, because of the assumption of finite relative dimension, the same proof will work for a relative CDM-class by replacing every instance of $|F|$ by $\chi(F)$, for $F\in\cC$.

The weak regularity result established in Proposition~\ref{wk6-reg} states that for every graphon $W$ and every $\epsilon>0$, there exists a  stepfunction with at most $N(\epsilon)=(3/\epsilon^3)^{(1/\epsilon^2)}$ steps which approximates $W^6$ up to $2\epsilon^2$ in the $\norm{\cdot}_\infty$-norm.

The idea is to improve this result by using the definability of $\bGamma$ and the constraints on the growth rates of the sets of realisations of definable sets over CDM-classes in the spirit of \ref{actual-use-cdm}.

Let us name the key objects. 
For $F\in\cC$ and an $s\in \bS(F)$, let $T=T_{s}:L^2[0,1]\to L^2[0,1]$ be the kernel operator 
$$
T_{s}(f)(v)=\int_{[0,1]}W_{s}(u,v)f(u)\,du
$$
associated with the stepfunction $W_{s}=W(\bGamma_s(F))$. 

The proof consists of the following conceptual steps.

\begin{enumerate}
\item By the definability of $\bGamma$, the set of relevant CDM-growth rates of certain definable invariants of $\bGamma$ is finite and hence separated by some minimal distance $\delta$. We choose an $\epsilon>0$ small enough with respect to $\delta$, and we construct a \emph{definable stepfunction} $\bW$ with at most $N(\epsilon)$ steps anticipating all the possible behaviours that may occur in regularising each $\bGamma_s(F)$ for varying $F\in\cC$ and $s\in\bS(F)$.

\item We verify that, for any large enough $F\in\cC$, any $s\in \bS(F)$, writing $W_s=W(\bGamma_s(F))$,  any 
(not necessarily definable) stepfunction with at most $N(\epsilon)$ steps that regularises $W_s^6$ and $(W_s^*)^6$ in view of \ref{wk6-reg}, turns out to be close to the steps of a realisation of $\bW$ constructed in (1) in the supremum norm.

\item From (1) and (2), it follows that $W_s$ is close to a realisation of $\bW$ in the cut metric, which shows the required regularity.
\end{enumerate}

\noindent{\bf Step 1.} We will use the definability of $\bGamma$ and decide on an appropriate choice of $\epsilon$.

The integral kernel $K_3(v,v')=K_{3,s}(v,v')$ of $TT^*TT^*TT^*$ for $T=T_{s}$ is explicitly given as 
$$
K_3(v,v')(F)=\frac{|\bG_{v,v',s}(F)|}{|\bU_s(F)|^3|\bV_s(F)|^2},
$$ 
where $\bG_{v,v',s}\subseteq \bU_s\times \bV_s\times \bU_s\times \bV_s\times \bU_s$ is the definable set
\begin{multline*}
\bG_{v,v',s}=\{(u_1,v_2,u_2,v_3,u_3)\in \bU_s\times \bV_s\times \bU_s\times \bV_s\times \bU_s:\\
(u_1,v), (u_1,v_2), (u_2,v_2), (u_2,v_3), (u_3, v_3), (u_3,v')\in \bE_s\}.
\end{multline*}
Using \ref{CDM}, we get that there exists a 
a definable function $\bc=\bc(v,v',s)$ symmetric in $v,v'$ 
such that, for all $F\in\cC$, $s\in \bS(F)$ and $v,v'\in \bV_s(F)$,  
\begin{equation}\label{def-k3}
K_{3,s}(v,v')=\bc(v,v',s)+O(|F|^{-1/2}).
\end{equation}

Replacing the role of $T_s$ by $T^*_s$, we consider the integral kernel
$$
K_{3,s}^*(u,u')
$$
of $T^*TT^*TT^*T$, and we similarly find a definable function $\bc^*=\bc^*(u,u',s)$ symmetric in $u,u'$ 
such that for all $F$, $s\in \bS(F)$ and $u,u'\in \bU_s(F)$,
\begin{equation}\label{unnamed}
K^*_{3,s}(u,u')=\bc^*(u,u',s)+O(|F|^{-1/2}).
\end{equation}

The set of values of $\bc$ and $\bc^*$ is finite, so its elements are separated by some minimal distance $\delta>0$. 

Let us choose an $\epsilon\in (0,1)$ such that $$2\epsilon^2<\delta/2.$$ 

For every $\bar{n}\leq n\leq N(\epsilon)=(3/\epsilon^3)^{(1/\epsilon^2)}$, 
writing
$$
\bV_{i,(v_1,\ldots,v_n,s)}=\{v\in \bV_s:\bigwedge_j \bc(v,v_j,s)=\bc(v_i,v_j,s)\},
$$
the formula
\begin{align*}
\psi_{\bar{n},n}(v_1,\ldots,v_n,s)\equiv & 
\left(\bigwedge_i v_i\in \bV_s \right)   
\land \left(\bV_s=\amalg_{i\leq n} \bV_{i,(v_1,\ldots,v_n,s)}\right)\\
&\land \bigwedge_{i\leq\bar{n}}\dim(\bV_{i,(v_1,\ldots,v_n,s)})=\dim(\bV_s) \\
&\land \dim(\bV_s\setminus\amalg_{i\leq\bar{n}}\bV_{i,(v_1,\ldots,v_n,s)})<\dim(\bV_s)\\
&\land \bigwedge_i\bigwedge_j \forall v\in \bV_{i,\gamma}\ \forall v'\in \bV_{j,\gamma}\ \ \bc(v,v',s)=\bc(v_i,v_j,s)
\end{align*}
expresses that $s'=(v_1,\ldots,v_n,s)$ can serve as parameters for a definable stepfunction on $\bV_s$ with steps $\bV_{1,s'},\ldots,\bV_{n,s'}$ and values $\bc(v_i,v_j,s)$ on $\bV_{i,s'}\times \bV_{j,s'}$,  where $\bV_{1,s'},\ldots,\bV_{\bar{n},s'}$ are the top-dimensional steps that we will call \emph{large} in the sequel.

Let $\bS_{\bar{n},n}\to \bS$ be the definable set associated with the formula $\psi_{\bar{n},n}$, and let 
$$
\bS_V\to \bS
$$
be the disjoint union of all the $\bS_{\bar{n},n}$ for $\bar{n}\leq n\leq N(\epsilon)$.
For each $i\leq n$, we have a definable set
$$
\bV_{i,\bar{n},n}\to \bS_{\bar{n},n}\to \bS_V.
$$

Analogously, using $\bc^*$ in place of $\bc$, for every $\bar{m}\leq m\leq N(\epsilon)$ 
we construct definable sets
$$
\bU_{j,\bar{m},m}\to \bS_{\bar{m},m}^*\to \bS_U,
$$
for $j\leq m$, which form steps of a definable stepfunction with values $\bc^*$. 

Using \ref{CDM} again, for each $i,\bar{n},n$ and $j,\bar{m},m$ as above, there is a definable function $\bff_{j,\bar{m},m,i,\bar{n},n}:\bS^*_{\bar{m},m}\times_{\bS} \bS_{\bar{n},n}\to\Q$ such that for all $F\in\cC$, and $s'\in \bS^*_{\bar{m},m}(F)$, $s''\in \bS_{\bar{n},n}(F)$ mapping onto $s\in \bS(F)$,
\begin{equation}\label{end-st1}
\frac{\left|\bE_s(F)\cap \left(\bU_{j,\bar{m},m,s'}(F)\times \bV_{i,\bar{n},n,s''}(F)\right)\right|}{\left|\bU_{j,\bar{m},m,s'}(F)\times \bV_{i,\bar{n},n,s''}(F)\right|}=\bff_{j,\bar{m},m,i,\bar{n},n}(s',s'')+O(|F|^{-1/2}).
\end{equation}

Let 
$$
\tilde{\bS}=\bS_U\times_\bS\bS_V.
$$


We consider the definable stepfunction 
$
\bW
$
over $\tilde{\bS}$ which is given by
%
$$\bW\restriction_{\bU_{j,\bar{m},m}\times \bV_{i,\bar{n},n}}=\bff_{j,\bar{m},m,i,\bar{n},n}$$
over the component $\bS^*_{\bar{m},m}\times_{\bS}\bS_{\bar{n},n}$ of $\tilde{\bS}$.


\noindent{\bf Step 2.} Let us choose $M$ such that 
\begin{quote}
for $|F|\geq M$,  the error terms in \ref{def-k3}, \ref{unnamed}, \ref{end-st1} are less than $\delta/2$.
\end{quote}

Let us consider $F\in\cC$ with $|F|\geq M$, a parameter $s\in\bS(F)$ and the stepfunction $W_s=W(\bGamma_s(F))$.

We apply Proposition~\ref{wk6-reg} to $W_s$ for the $\epsilon$ chosen in Step~1, and this yields a partition 
$$
\bV_s(F)=V_1\amalg \cdots\amalg V_n
$$
into $n\leq N(\epsilon)$ sets $V_i$ so that the integral kernel $K_3(v,v')$ fluctuates by less than $\delta/2$ on each $V_i\times V_j$. On the other hand, by (\ref{def-k3}), and the choice of $F$, each value of $K_3(v,v')$ is within $\delta/2$ from a value of the definable function $\bc$. 
Hence we can improve the near-constancy to the property
\begin{equation}\label{tao10}
K_{3,s}(v,v')=\gamma_{ij}+O(|F|^{-1/2})
\end{equation}
for some value $\gamma_{ij}$ of $\bc$, whenever $v\in V_i$, $v'\in V_j$ and $i,j\in\{1,\ldots,n\}$.

Note that $K_3$ is symmetric, so $\gamma_{ij}$ will be symmetric too.  Moreover, by reducing the number of $V_i$, we can assume that $\gamma_{ij}=\gamma_{i'j}$ for all $j\in\{1,\ldots,n\}$ implies $i=i'$, i.e., that the array $\gamma$ is reduced.

Using this property, we see that, if $v_1,\ldots,v_n$ satisfy the definable condition
$$
\bigwedge_i\bigwedge_j \bc(v_i,v_j,s)=\gamma_{ij},
$$
then there exists a permutation $\sigma$ of $\{1,\ldots,n\}$ such that $\gamma_{\sigma(i),\sigma(j)}=\gamma_{ij}$ and $v_i\in V_{\sigma(i)}$.

Note, if $v_i\in V_i$ for $i\in\{1,\ldots,n\}$ are parameters, then $V_i$ is definable as
$$
V_i=\{v\in \bV_s(F):\bigwedge_j \bc(v,v_j)=\gamma_{ij}\}.
$$
Hence, choosing parameters $s'=(v_1,\ldots,v_n,s)\in\bS_{\bar{n},n,s}(F)$ 
determines $V_i$ up to a permutation $\sigma$ preserving $\gamma_{i,j}$. Moreover, for a fixed $s'$ we may relabel the $V_i$ so that 
$$
V_{i}=\bV_{i,\bar{n},n,s'}(F).
$$

Note that $\bV_{i,\bar{n},n}$ for $i\leq\bar{n}$ are large, so \ref{CDM}
gives that for some positive $a_i\in\Q$,
\begin{equation}\label{largecells}
\frac{|V_i|}{|\bV_s(F)|}=a_i+O(|F|^{-1/2}),
\end{equation}
while for $i>\bar{n}$,
 $\dim(\bV_{i,\bar{n},n,s'})<\dim(V)$ and
\begin{equation}\label{smallcells}
|V_i|\ll |F|^{-1}|\bV_s(F)|.
\end{equation}

Let $\bar{V}_j\subseteq[0,1]$ denote the segment corresponding to a large step $V_j$ in the stepfunction $W_s$ associated to the graph $\bGamma_s(F)$ via \ref{assoc-stepfun}. Suppose $g\in L^2[0,1]$ is supported on $\bar{V}_j$ with mean zero. Combining (\ref{tao9}) and (\ref{tao10}), we obtain
\begin{align*}
\sum_i \sigma_i^6|\langle g,y_i\rangle|^2 & =
|\langle TT^*TT^*TT^*g,g\rangle|=\left|\int_0^1\int_0^1 K_3(v,v')g(v)g(v')\, dv\, dv'\right|\\ 
& \ll |F|^{-1/2}\norm{g}^2_2.
\end{align*}
On the other hand, Bessel's inequality gives
$$
\sum_i |\langle g,y_i\rangle|^2\leq \norm{g}^2_2,
$$
and so H\"older's inequality (combining these two inequalities via \ref{holder-lemma}) shows that
$$
\norm{T^*g}^2_2=\sum_i\sigma_i^2|\langle g,y_i\rangle|^2\ll |F|^{-1/6}\norm{g}^2_2,
$$ 
whence $$
\norm{T^*g}_2\ll |F|^{-1/12}\norm{g}_2,
$$

Similarly, arguing for $K_3^*(u,u')$, we can find a partition 
$$\bU_s(F)=U_1\amalg\cdots\amalg U_{m}$$ with $m\leq N(\epsilon)$ 
so that, upon fixing a parameter $s''=(u_1,\ldots,u_m,s)\in\bS^*_{\bar{m},m,s}(F)$, 
by relabeling $U_j$, we have
$$
U_j=\bU_{j,\bar{m},m,s''}(F).
$$
Moreover,
$$
\norm{Tf}_2\ll |F|^{-1/12}\norm{f}_2,
$$
when $f$ is supported on one of the segments $\bar{U}_i$ corresponding to a large step $U_i$ with mean zero.

\noindent{\bf Step 3.}
Let $\bW$ be the definable stepfunction over $\tilde{S}$ defined in Step~1.


We claim that for $\tilde{s}\in\tilde{S}(F)$ which maps to $s'$ and $s''$ from Step~2,
$$
d_{\square}(W_{s},\bW_{\tilde{s}}(F))\ll |F|^{-1/12}.
$$

Combining the estimates obtained in Step 2 with the Cauchy-Schwarz inequality, we deduce that
$$
\left|\langle Tf,g\rangle_2\right|\ll |F|^{-1/12}\norm{f}_2\norm{g}_2
$$ 
whenever $f$ and $g$ are supported on $\bar{V}_i$ and $\bar{U}_j$ with at least one of $f$, $g$ of mean zero.

For measurable $A\subseteq \bar{U}_i$ and $B\subseteq\bar{V}_j$, we decompose the characteristic functions 
$$1_A=(1_A-\mu(A)/\mu(\bar{U}_i)1_{\bar{U}_i})+\mu(A)/\mu(\bar{U}_i)1_{\bar{U}_i}$$ and
$$1_B=(1_B-\mu(B)/\mu(\bar{V}_j)1_{\bar{V}_j})+\mu(B)/\mu(\bar{V}_j)1_{\bar{V}_j}$$ into sums of mean-zero and constant functions.
Note that the mean-zero parts are bounded by 1. We obtain that
$$
\langle T1_A,1_B\rangle_2=\frac{\mu(A)}{\mu(\bar{U}_i)}\frac{\mu(B)}{\mu(\bar{V}_i)}
\langle T1_{\bar{U}_i},1_{\bar{V}_j}\rangle_2+\sqrt{\mu(\bar{U}_i)\mu(\bar{V}_j)}\,O(|F|^{-1/12}).
$$

On the other hand, using \ref{end-st1},
\begin{align*}
\langle T1_{\bar{U}_i},1_{\bar{V}_j}\rangle_2 & =\int_{[0,1]^2}W(u,v)1_{\bar{U}_i}(u)1_{\bar{V}_j}(v)\,du dv\\
& = \frac{|\bE_s\cap (U_i\times V_j)|}{|U||V|} =(\bff_{ij}(\tilde{s})+O(|F|^{-1/2}))\mu(\bar{U}_i)\mu(\bar{V}_j),
\end{align*}
hence
$$
\langle T1_{A},1_{B}\rangle_2=\mu(A)\mu(B)\bff_{ij}(\tilde{s})+\sqrt{\mu(\bar{U}_i)\mu(\bar{V}_j)}\,O(|F|^{-1/12}).
$$
Thus, for measurable $A\subseteq \bar{U}_i$ and $B\subseteq\bar{V}_j$, we have
$$
\int_{[0,1]^2}(W_s(u,v)-\bff_{ij}(\tilde{s}))1_A(u)1_B(v)\ll \sqrt{\mu(\bar{U}_i)\mu(\bar{V}_j)}|F|^{-1/12}.
$$
For arbitrary measurable $A,B\subseteq[0,1]$, writing $A_i=A\cap \bar{U}_i$ and $B_j=B\cap\bar{V}_j$, 
\begin{align*}
d_\square(W_s,\bW_{\tilde{s}}(F)) & = 
\sup_{A,B\subseteq[0,1]}\int_{[0,1]^2}(W_s-\bW_{\tilde{s}}(F))1_A1_B\\
&=\sup_{A,B\subseteq[0,1]}\sum_{i,j}\int_{[0,1]^2}(W_s-\bff_{ij}(\tilde{s}))1_{A_i}1_{B_j}+  O(|F|^{-1})  \\
&\ll \sum_{i,j}|F|^{-1/12}\sqrt{\mu(\bar{U}_i)\mu(\bar{V}_j)}  +  O(|F|^{-1}) \\
& \leq  \sqrt{\bar{m}\bar{n}}|F|^{-1/12}+  O(|F|^{-1}) ,
\end{align*}
where the summation is over the large blocks and, by (\ref{smallcells}), and the behaviour over non-large/lower-dimensional blocks is absorbed by the $O(|F|^{-1})$ term. The last inequality follows from Jensen's inequality.

The desired conclusion follows from the fact that $\bar{m}$ and $\bar{n}$ are bounded independently of $F$.


%


\end{proof}

Combining \ref{acc-pts-def-stepf} and \ref{reg-asymp}, we obtain the following.

\begin{corollary}\label{accumul}
Let $\bGamma$ be a definable graph of finite relative dimension over $\bS$ on an asymptotic class $\cC$ relative to $\chi$. The set of accumulation points of the family of finite graphs 
$$
\{\bGamma_s(F): F\in\cC, s\in \bS(F) \}
$$
in the space $\tilde{\W_0}$ of graphons has a set of representatives which is a finite set of stepfunctions.


\end{corollary}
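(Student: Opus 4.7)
The plan is to combine Theorem~\ref{reg-asymp} with the earlier lemma on accumulation points of definable stepfunctions. In essence, Theorem~\ref{reg-asymp} reduces the corollary to the case of a definable stepfunction, for which the accumulation set has already been shown to be finite.

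First, I would apply Theorem~\ref{reg-asymp} to the definable bipartite graph $\bGamma$ over $\bS$, producing a definable set $\tilde{\bS}\to\bS$ and a definable stepfunction $\bW$ over $\tilde{\bS}$ such that for every $\varepsilon>0$ there exists $M>0$ with $d_\square(\bGamma_s(F),\bW_{\tilde{s}}(F))\leq\varepsilon$ whenever $|F|\geq M$ and $\tilde{s}\in\tilde{\bS}(F)$ maps to $s\in\bS(F)$. I would then note that, by construction, $\tilde{\bS}\to\bS$ is surjective on realisations over each $F\in\cC$: the parameters $\tilde{s}=(s',s'')$ correspond to choices of representatives $u_i,v_j$ in the large cells of the partitions produced in Step~2 of the proof of Theorem~\ref{reg-cdm}/\ref{reg-asymp}, and such representatives exist whenever $F$ is large enough.

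Next, I would invoke the lemma preceding Theorem~\ref{reg-asymp} to conclude that the set of accumulation points of $\{\bW_{\tilde{s}}(F):F\in\cC,\ \tilde{s}\in\tilde{\bS}(F)\}$ in $(\widetilde{\W}_0,\delta_\square)$ is a finite set $\cF$ of stepfunctions. That lemma was formulated for CDM-classes, but an entirely analogous argument works for asymptotic classes: one partitions $\tilde{\bS}$ according to the finitely many values of the defining data $\be_{ij}$, $\bmu_{\bU_i}$, $\bmu_{\bV_j}$ (each a definable function to a finite set), and on each piece of this partition the realisations $\bW_{\tilde{s}}(F)$ converge in $\|\cdot\|_1$ to a single stepfunction as $|F|\to\infty$, by the asymptotic estimate $\big||\bX_s(F)|-\bmu_{\bX}(s)|F|^{\dd_{\bX}(s)}\big|\leq\varepsilon|F|^{\dd_{\bX}(s)}$.

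Finally, suppose $W^\ast\in\widetilde{\W}_0$ is an accumulation point of $\{\bGamma_{s_n}(F_n)\}$; since there are only finitely many graphs $\bGamma_s(F)$ with $|F|$ bounded, we may assume $|F_n|\to\infty$. Lift each $s_n$ to some $\tilde{s}_n\in\tilde{\bS}(F_n)$. By Theorem~\ref{reg-asymp}, $d_\square(\bGamma_{s_n}(F_n),\bW_{\tilde{s}_n}(F_n))\to 0$, so $\bW_{\tilde{s}_n}(F_n)\to W^\ast$ as well, and hence $W^\ast\in\cF$. The main obstacle I anticipate is the surjectivity of $\tilde{\bS}\to\bS$ on $F$-points for large enough $F$, together with the transfer of the accumulation lemma from the CDM setting to the weaker asymptotic setting; both are routine but must be checked carefully.
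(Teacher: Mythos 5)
Your proposal is correct and follows exactly the route the paper implicitly intends: the corollary is stated without explicit proof, and the natural argument is precisely what you give — apply Theorem~\ref{reg-asymp} to reduce to realisations of a definable stepfunction, invoke the lemma on accumulation points of definable stepfunctions, and pass the limit through the triangle inequality. You are also right to flag the two points that need checking: that $\tilde{\bS}\to\bS$ is surjective on $F$-points for large $F$ (which indeed follows from Step~2 of the proof of the regularity lemma, where representatives $v_i\in V_i$, $u_j\in U_j$ are chosen to serve as parameters), and that the accumulation lemma — stated in the paper for CDM-classes — carries over to asymptotic classes by replacing the $O(|F|^{-1/2})$ error term with the weaker "for every $\epsilon$ there is $N$" estimate, which still yields convergence in $\|\cdot\|_1$ and hence in the cut metric on each piece of the finite definable partition of $\tilde{\bS}$. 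The only cosmetic remark is that the definable functions $\bmu_{\bU_i},\bmu_{\bV_j}$ take values in $\Q$ rather than in a finite set a priori, but as definable functions on a definable set their ranges are finite, which is what makes the partition of $\tilde{\bS}$ finite; you implicitly use this, and it is worth stating explicitly.
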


\section{Finite fields}
The original algebraic regularity lemma by Tao in \cite{tao-original} was formulated for graphs definable in the language of rings over finite fields. We give several examples of graphons arising in this context by along the lines of our graphon formulation \ref{accumul}.

\begin{example}\label{paley}
Consider the definable graph $\bGamma=(\bU,\bV,\bE)$ where $\bU=\bV=\A^1$, and the edge relation is
$$
\bE(x,y)\equiv \exists z\, \, x+y=z^2.
$$
We are interested in accumulation points in the graphon space of the set of graphs
$$
\bGamma(\F_q),
$$
as $\F_q$ ranges over all finite fields (these are in fact a symmetric variant of the well-known \emph{Paley graphs}). When $\chr(\F_q)\neq 2$, being a square is an event of `CDM probability' (in the sense of \ref{actual-use-cdm}) 1/2. Thus, the edge density is approximately 1/2 and these graphs accumulate around the constant graphon
$W(1/2)$. 
These graphs are interesting in model theory due to the fact that their ultraproduct limit is the \emph{random graph}.  

For $\chr(\F_q)=2$, everything is a square, so the graphs accumulate around the constant graphon $W(1)$. 

The definable stepfunction $\bW:\bU\times\bV\to\Q$ that regularises $\bGamma$ is 
$$
\bW(x,y)=
\begin{cases} 
1/2 & \text{if }\exists z\, z+z\neq 0;\\
1 & \text{if } \forall z\, z+z=0.
\end{cases}
$$

Hence, the set of accumulation points is
$$
\{W(1/2),W(1)\}.
$$
\end{example}

\begin{example}
Consider the definable graph $\bGamma=(\A^1,\A^1,\bE)$ with the edge relation
$$
\bE(x,y)\equiv \exists z\, \, xy=z^2.
$$
The definable sets
$$
\bU_0(x)\equiv \exists z\, x=z^2 \ \ \ \text{ and }\ \ \ \ \bU_1=\A^1\setminus \bU_0
$$
partition $\A^1$. Let $\bW$ be the definable function which returns $1$ on $\bU_0\times\bU_0\cup\bU_1\times\bU_1$ and $0$ elsewhere. The definable stepfunction $\bW$ regularises $\bGamma$, and the set of accumulation points of $\bGamma(\F_q)$ in the space of graphons
is
$$
\left\{W\begin{pmatrix} 1 & 0\\ 0 & 1\end{pmatrix}, W(1)\right\},
$$
where the former denotes the stepfunction associated to the graph with edge weights matrix 
$\begin{pmatrix} 1 & 0\\ 0 & 1\end{pmatrix}$ as in \ref{assoc-incid}, which is the accumulation point of $\bGamma(\F_q)$ for odd $q$, and the latter is the constant graphon $1$, which is the accumulation point of $\bGamma(\F_q)$ for even $q$.
\end{example}

In the following example, unlike in Example~\ref{paley}, the limit behaviour is not determined by the characteristic only.
\begin{example}
Let us define the graph $\bGamma$ on $\A^1$ through the edge relation
$$
\bE(x,y)\equiv \exists z\, \, x+y=z^3.
$$
 In fact,  the set of cubes in $\F_q$ is of size $(q-1)/3+1$ if $3| q-1$ and of size $q$ otherwise. 

The first condition happens precisely when the primitive cube root of unity belongs to $\F_q$,  which is expressible by the first-order condition
$$
\exists t\, \,t^3=1\land t\neq 1
$$
so $\bGamma$ is definably regularised.
To conclude, the graphs $\bGamma(\F_q)$ accumulate around
$
W(1/3)
$
when $3| q-1$, and around 
$
W(1)
$
otherwise.
\end{example}

The following example illustrates the need to introduce parameters.
\begin{example}\label{cubes-prod}
Consider the graph $\bGamma$ on $\A^1$ with the edge relation
$$
\bE(x,y)\equiv \exists z\,\, xy=z^3.
$$
The graph is trivially considered over the base $\bS=\A^0$. Let 
$$\tilde{S}(z)\equiv z^3=1\land z\neq 1,$$
and consider the definable sets $\bU_i$ over $\tilde{\bS}$, $i=0,1,2$:
$$
\bU_i(x;z)\equiv \exists u \,(x=z^iu^3).
$$
We see that $\bGamma$ is regularised by the definable stepfunction $\bW(x,y;z)$ over $\tilde{\bS}$ taking value $1$ on $\bU_0\times\bU_0\cup\bU_1\times\bU_2\cup\bU_2\times\bU_1$ and $0$ elsewhere, and the graphs $\bGamma(\F_q)$ accumulate around the stepfunction
$$
W\begin{pmatrix} 1&0&0\\ 0 & 0 &1\\ 0 & 1& 0\\\end{pmatrix}
$$
when $3|q-1$, and around
$
W(1)
$
when $3\not| q-1$.

\end{example}

\section{Fields with powers of Frobenius}\label{sect:frob}

\subsection{Difference algebra}
A \emph{difference ring} is a pair
$$
(R,\sigma)
$$
consisting of a commutative ring with identity $R$ and an endomorphism $\sigma:R\to R$. We call it a \emph{difference field} if $R$ happens to be a field.

A \emph{homomorphism} of difference rings 
$$
f:(R,\sigma_R)\to (S,\sigma_S)
$$
is a ring homomorphism $f:R\to S$ satisfying
$$
\sigma_S\circ f=f\circ\sigma_R.
$$
The category of difference rings is denoted by
$$
\sigma\Rng
$$
A difference ring $(R,\sigma)$ is a \emph{transformal domain} if $R$ is a domain and $\sigma$ is injective. 

\begin{notation}
For a prime power $q$, we write
$$K_q=(\bar{\F}_q,\varphi_q)$$
for the difference field consisting of the algebraic closure of a finite field, together with a power of the Frobenius automorphism
$$
\varphi_q(x)=x^q.
$$
\end{notation}

\subsection{Counting points over fields with Frobenius}

In \cite{ive-mark}, we established the following difference analogue of \cite{CDM}.

\begin{theorem}
Let $\bX\to \bS$ be a definable map in the language of difference rings.
Then there exist
\begin{enumerate}
\item a definable function $\bmu_{\bX}:\bS\to \Q\cup\{\infty\}$,
\item a definable function $\dd_{\bX}:\bS\to \N\cup\{\infty\}$,
\item a constant $C_{\bX}>0$,
\end{enumerate}
so that, for every $K_q$ with $q>C_{\bX}$ and every $s\in\bS(K_q)$,
$$
\left| |\bX_s(K_q)|-\bmu_{\bX}(s)\,q^{\dd_{\bX}(s)}\right|\leq C_{\bX}\, q^{\dd_{\bX}(s)-1/2}. 
$$
\end{theorem}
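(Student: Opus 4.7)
The plan is to reduce the general definable set $\bX$ to a Boolean combination of transformally integral difference varieties of finite transformal type, and then to invoke Hrushovski's twisted Lang--Weil estimates for such varieties over $K_q$. Concretely, working in the theory ACFA of existentially closed difference fields, every $\cL_\sigma$-formula with parameters is equivalent (modulo ACFA) to a formula of the form ``$\exists y\ \theta(x,y)$'' where $\theta$ is quantifier-free and the projection is finite-to-one on a large subset; this is the near quantifier elimination for ACFA due to Chatzidakis--Hrushovski. Applying this fibrewise to $\bpi:\bX\to\bS$, one partitions $\bX$ into definable pieces each of which, over generic parameters in $\bS$, is Galois-related to a transformally integral difference scheme $\bY\to\bS$ of finite transformal type.

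The core input is then the twisted Lang--Weil estimate: for a transformally integral difference variety $Y$ over $(k,\sigma)$ of total transformal dimension $d$, there is a rational invariant $\mu(Y)$ (the ``multiplicity'' coming from the generic degree of the difference-algebraic correspondence) such that
$$
\left||Y(K_q)|-\mu(Y)\, q^d\right|\leq C_Y\, q^{d-1/2}
$$
for all $q$ exceeding some threshold depending only on data of $Y$. Summing over Galois orbits and inclusion-excluding across the partition pieces yields the same shape of estimate for $|\bX_s(K_q)|$ with constants depending on $s$; one allows the values $\infty$ in $\bmu_{\bX}$ and $\dd_{\bX}$ to cover those fibres that are not of finite transformal type (equivalently, those for which the defining scheme has a component whose $\sigma$-dimension is infinite).

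The essential additional work, beyond the two black boxes above, is the \emph{definability} of $\bmu_{\bX}$ and $\dd_{\bX}$ as functions on $\bS$ and the \emph{uniformity} of the constant $C_{\bX}$. For definability, I would argue by compactness in ACFA: the statement ``$|\bX_s(K_q)|$ is within $\epsilon q^{d}$ of $\mu q^d$'' is expressible by a first-order scheme in the pseudofinite difference-field setting, and the invariants $\dim^\sigma$ and $\mu$ agree on elementarily equivalent structures, so one obtains a definable stratification of $\bS$ on each piece of which $\dd_{\bX}$ and $\bmu_{\bX}$ take constant values. For uniformity of $C_{\bX}$, one uses that the ingredients of the twisted Lang--Weil bound (degrees, heights, number of absolutely irreducible components, etc.) are bounded along a single definable family, by Noetherianity of the Cohen ring of $(k,\sigma)$ coupled with the fact that $\bX\to\bS$ has bounded complexity.

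The main obstacle is the last paragraph: proving that the counting invariants $\mu$ and $d$ descend to \emph{definable} functions on the parameter space $\bS$ in the language of difference rings. Total transformal dimension is a limsup of ordinary dimensions of Frobenius-twisted projections and need not be obviously first-order; one has to use the fact that over a pseudofinite difference field, transformal dimension coincides with SU-rank on transformally integral pieces, and then invoke definability of SU-rank in supersimple theories for types over parameters coming from a definable family. Once this is in place, partitioning $\bS$ into strata of constant $(\mu,d)$ and packaging the error terms is routine.
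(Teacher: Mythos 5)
This theorem is not proved in the paper at all: it is cited verbatim as an established result from Ryten--Toma\v si\'c, \emph{ACFA and measurability} (reference \cite{ive-mark}). So there is no ``paper's own proof'' to compare you against, and you should not have expected to find one --- the right move when encountering such a statement is simply to note the citation and, if reconstructing the argument, to be explicit that you are reproducing the cited proof rather than the present paper's.

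As to the sketch itself: the skeleton is the right one and agrees with what \cite{ive-mark} actually does. The two black boxes you identify are correct --- Hrushovski's twisted Lang--Weil estimate supplies the leading-order count for transformally integral difference varieties of finite total dimension, and the ACFA quantifier reduction (every formula is equivalent to $\exists y\,\theta(x,y)$ with $\theta$ quantifier-free and $y$ algebraic over $x$) lets one reduce a general definable family to a family of such varieties modulo Galois bookkeeping, exactly in the spirit of the CDM proof over finite fields. The observation that $\dd_{\bX}$ and $\bmu_{\bX}$ are ``definable functions'' in the paper's sense means precisely a finite definable stratification of $\bS$ with constant values on strata, and that is indeed established by a compactness argument using the fact that ultraproducts of the $K_q$ model ACFA.

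Where the sketch is shakiest is the last paragraph. Appealing to ``definability of SU-rank in supersimple theories'' is both imprecise (SU-rank is not in general definable in supersimple theories; at best one has definability of Lascar rank in some tame classes) and not what the actual argument does. The definability of the dimension and measure in \cite{ive-mark} comes from two more concrete facts: (a) the quantifier reduction gives a \emph{bounded} family of candidate quantifier-free presentations as $s$ varies over $\bS$, so the degrees, heights, and number of components entering the twisted Lang--Weil bound are uniformly bounded in the family; and (b) for a quantifier-free-presented family, the set of parameters $s$ for which the fibre has a fixed total dimension $d$ and leading coefficient $\mu$ is itself cut out by a first-order condition in the difference-ring language, by the same kind of elimination theory (Ritt--Raudenbush / difference-algebraic constructibility) that makes ordinary dimension definable in ACF. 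You should replace the SU-rank appeal by this more elementary stratification argument; as written, that step would not go through. The passage from the ultraproduct back to individual $K_q$ also deserves a sentence: Hrushovski's theorem is exactly what licenses transferring the ACFA-level stratification to the asymptotic estimate over the actual $K_q$.
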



\begin{corollary}\label{ac-with-frob}
The class $\cC$ of fields $K_q$ for prime powers $q$ is a CDM-class relative to the function
$$
\chi(K_q)=q.
$$
\end{corollary}

\begin{remark}
An alternative approach to \ref{ac-with-frob}  is to study \emph{finite fields} with powers of Frobenius, which requires the study of $N$-dimensional asymptotic classes. The authors of \cite{GMS} go even further, and note that Tao's regularity lemma applies in classes of structures with \emph{pseudofinite dimension}.
\end{remark}

\subsection{Algebraic regularity lemma for fields with Frobenius}

Using the fact \ref{ac-with-frob} that fields with Frobenius constitute a relative CDM-class and regularity lemma \ref{reg-cdm}, we obtain the following.
\begin{corollary}
\label{reg-frob}
Let $\bGamma=(\bU,\bV,\bE)$ be a definable bipartite graph of finite relative dimension over a definable set $\bS$ in the language of difference rings. 
There exists a constant $M=M(\bGamma)>0$, a definable set $\tilde{\bS}$ over $\bS$ and a 
definable stepfunction $\bW$ over $\tilde{\bS}$ such that for every $K_q$ with $q>M$, every $\tilde{s}\in \tilde{\bS}(K_q)$ mapping onto $s\in \bS(K_q)$, 
$$
d_\square(\bGamma_s(K_q),\bW_{\tilde{s}}(K_q))\leq M\,q^{-1/12}.
$$ 
\end{corollary}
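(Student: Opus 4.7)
The strategy is to reduce Corollary~\ref{reg-frob} to Theorem~\ref{reg-cdm} by verifying that the family $\cC=\{K_q:q\text{ a prime power}\}$, equipped with the convention $|K_q|=q$ from Remark~\ref{ac-with-frob} and restricted to definable sets of finite transformal dimension, satisfies all the point-counting estimates used in the proof of Theorem~\ref{reg-cdm}. The theorem from \cite{ive-mark} quoted above is precisely the CDM-style estimate of Definition~\ref{CDM}, valid for every definable $\bX\to\bS$ of finite relative dimension, with the threshold $q>C_{\bX}$ in place of the condition that $F$ be an arbitrary structure in a CDM-class.

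First, I would revisit the proof of Theorem~\ref{reg-cdm} and record every place where Definition~\ref{CDM} is invoked. These are: the derivation of $K_{3,s}(v,v')=\bc(v,v',s)+O(|F|^{-1/2})$ and its transposed counterpart for $\bc^*$ from the auxiliary definable sets $\bG_{v,v',s}$ and their $\bU$-analogues; the asymptotic formula for the edge densities $\bff_{j,\bar m,m,i,\bar n,n}$; and the estimates (\ref{largecells})--(\ref{smallcells}) controlling the sizes of the steps $\bV_{i,\bar n,n,s'}$ and $\bU_{j,\bar m,m,s''}$. Under the hypothesis that $\bGamma$ is of finite relative dimension over $\bS$, each of these auxiliary sets is obtained by a bounded sequence of cartesian products, projections, boolean combinations and fibres starting from $\bU,\bV,\bE$, and therefore inherits finite relative dimension. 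Hence the difference-field point-counting theorem applies uniformly to all of them, with a single threshold $M_0$ obtained as the maximum of the finitely many constants $C_{\bX}$ produced along the way.

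The remainder of the argument in Theorem~\ref{reg-cdm} is purely combinatorial and operator-theoretic: given the definable functions $\bc,\bc^*$ and the asymptotic estimates above, one chooses $\epsilon$ with $2\epsilon^2<\delta/2$, where $\delta$ is the minimal separation between the finitely many values of $\bc$ and $\bc^*$; one constructs the definable stepfunction $\bW$ over $\tilde{\bS}=\bS_U\times_{\bS}\bS_V$ exactly as in Step~1; and one runs the three-step argument (regularisation of $W_s^6$ via Lemma~\ref{wk6-reg}, matching of the resulting cells to realisations of the $\bV_{i,\bar n,n}$ up to a permutation preserving the array $\gamma$, and conversion from near-constancy of $K_3,K_3^*$ to a cut-metric bound via Cauchy--Schwarz, Bessel and H\"older). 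At no point does this second half use finiteness of the underlying structure; only the CDM-type size estimates enter, and these are now available for $K_q$ as soon as $q>M$ for $M$ sufficiently large compared with $M_0$.

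The main obstacle is purely bookkeeping in the first step: one must verify that none of the auxiliary definable sets $\bS_{\bar n,n},\bS^*_{\bar m,m},\bV_{i,\bar n,n},\bU_{j,\bar m,m}$ secretly acquires an infinite-dimensional component over $\bS$, since the point-counting theorem of \cite{ive-mark} only delivers the desired $q^{-1/2}$-error on the finite-dimensional locus. As the defining formulae $\psi_{\bar n,n}$ are built using only bounded conjunctions over indices $i,j\leq N(\epsilon)$ and equalities between values of $\bc,\bc^*$, and since $\bc,\bc^*$ themselves have finitely many values determined by the finite-dimensional data of $\bGamma$, the claim of finite relative dimension is preserved throughout. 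Once this observation is in place, Theorem~\ref{reg-cdm} transfers verbatim with $|F|$ replaced by $q$, and we obtain the desired bound $d_\square(\bGamma_s(K_q),\bW_{\tilde s}(K_q))\leq M\,q^{-1/12}$.
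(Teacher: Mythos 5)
Your proposal is correct and is essentially the argument the paper has in mind: the paper states Corollary~\ref{reg-frob} without a written proof, relying on Remark~\ref{ac-with-frob} together with the difference-field point-counting theorem from \cite{ive-mark} to transfer Theorem~\ref{reg-cdm}. Your careful bookkeeping --- checking that every auxiliary definable set $\bG_{v,v',s}$, $\bS_{\bar n,n}$, $\bS^*_{\bar m,m}$, $\bV_{i,\bar n,n}$, $\bU_{j,\bar m,m}$ appearing in the proof of Theorem~\ref{reg-cdm} inherits finite relative dimension from $\bGamma$, and taking $M$ larger than the maximum of the resulting constants $C_{\bX}$ --- is exactly the verification the paper leaves implicit.
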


\begin{example}\label{diff-cubes}
Let $\bU(x)\equiv x\sigma(x)^2=1$, and let $\bGamma$ be a graph on $\bU\times\bU$ with the edge relation
$$
\bE(x,y)\equiv \exists z\in\bU\, \, xy=z^3.
$$
Then $\bU(K_q)$ is the group $\mu_{2q+1}$ of $(2q+1)$-th roots of unity in $\bar{\F}_q$, and the size of the set of cubes depends on whether $\mu_3\subseteq\mu_{2q+1}$, i.e., whether $3|2q+1$.
Henceforth we can follow the reasoning from Example~\ref{cubes-prod} and we obtain the same limit stepfunctions. 

On the other hand, let us point out that the graphs $\bGamma(K_q)$ cannot be obtained from a graph interpretable over finite fields, which shows that the difference context is genuinely richer. Indeed, already in even characteristic, for each $q$, the smallest finite field containing the set $\mu_{2q+1}=\bU(K_q)$ is $\F_{4q^2}$, so the size of the set of realisations grows roughly as the square root of the size of the corresponding finite field, which cannot happen by the CDM-property \ref{CDM} proved for finite fields in \cite{CDM}.
\end{example}

\begin{example}
Let $\bGamma$ be defined on the same $\bU$ as in Example~\ref{diff-cubes}, but with 
$$
\bE(x,y)\equiv \exists z\in\bU\, \, x\sigma(y)=z^3.
$$
In order to facilitate the notation, write $\Gamma=(U,U,E)=\bGamma(K_q)$, so that $U=\mu_{2q+1}\subseteq\bar{\F}_q$. Let $\zeta\in \bar{\F}_q$ be a primitive cube root of unity, and write $U_i=\zeta^iU^3$, for $i=0,1,2$. We split the fields $K_q$ into subclasses:
\begin{enumerate}
\item $\zeta\notin U$, i.e. $3\not| 2q+1$;
\item $\zeta\in U$ and $\zeta^q=\zeta$, i.e., $3|2q+1$ and $3|q-1$;
\item $\zeta\in U$ and $\zeta^q=\zeta^2$, i.e., $3|2q+1$ and $3\not|q-1$.
\end{enumerate}
The set of corresponding limits is
$$
\left\{W(1),
W\begin{pmatrix} 1&0&0\\ 0 & 0 &1\\ 0 & 1& 0\end{pmatrix}, 
W\begin{pmatrix} 1 & 0 & 0\\0 & 1 &0\\0&0&1\end{pmatrix}\right\}.
$$

\end{example}


\section{Expander difference polynomials}

\subsection{Difference schemes}

\begin{definition}
If $(k,\sigma)$ is a difference ring, a \emph{difference $(k,\sigma)$-algebra} is a difference ring $(A,\sigma)$ endowed with a difference ring homomorphism $(k,\sigma)\to (A,\sigma)$. A morphism between $(k,\sigma)$-algebras $(A,\sigma)$ and $(B,\sigma)$ is a difference ring homomorphism $(A,\sigma)\to (B,\sigma)$ which commutes with the structure maps $(k,\sigma)\to (A,\sigma)$ and $(k,\sigma)\to (B,\sigma)$. We write
$$
(k,\sigma)\Alg
$$ 
for the category of $(k,\sigma)$-algebras.
\end{definition}

\begin{definition}
Let $(k,\sigma)$ be a difference ring. The \emph{difference polynomial ring} in variables $x_1,\ldots,x_n$ is the difference ring in infinitely many variables
$$
k[x_1,\ldots,x_n]_\sigma=k[x_{1,i},\ldots,x_{n,i}: i\geq 0],
$$
with $\sigma$ inferred from the rule $\sigma(x_{j,i})=x_{j,i+1}$ and its action on $k$.
Informally speaking, if $x=(x_1,\ldots,x_n)$, we can write 
$$k[x]_\sigma=k[x,\sigma x,\sigma^2 x,\ldots].$$
Note that $k[x]_\sigma$ is naturally a $(k,\sigma)$-algebra.
\end{definition}

\begin{definition}
Let $(k,\sigma)$ be a difference ring. We say that a $(k,\sigma)$-algebra $A$ is
\begin{enumerate}
\item of \emph{finite $\sigma$-type}, if it admits an epimorphism $k[x_1,\ldots,x_n]_\sigma\to A$, for some $n$;
\item of \emph{finite $\sigma$-presentation}, if it is a quotient $$A\simeq k[x_1,\ldots,x_n]_\sigma/\langle f_1,\ldots,f_m\rangle_\sigma,$$
where $\langle f_1,\ldots,f_m\rangle_\sigma$ is the \emph{difference ideal} (ideal closed under $\sigma$) generated by $f_1,\ldots,f_n\in k[x_1,\ldots,x_n]_\sigma$, for some $m,n$. 

\end{enumerate}
\end{definition}

\begin{definition}
Let $(A,\sigma)$ be a difference ring. The \emph{affine difference scheme} associated with $A$ is the functor
$$
\bX:\sigma\Rng\to\Set,\ \ \  \bX(R,\sigma)=\Hom_{\sigma\Rng}((A,\sigma),(R,\sigma)).
$$ 
A \emph{morphism} between affine difference schemes  is a natural  transformation
$$\varphi:\bX\to \bY.$$
We shall only ever discuss affine difference schemes, so we omit the word {`}affine' form our descriptions. The category of difference schemes is denoted
$$
\sigma\Sch.
$$
\end{definition}
\begin{definition}
Given $\bS\in\sigma\Sch$, a \emph{difference scheme over $\bS$} is a morphism $\bX\to\bS$. A \emph{morphism} between schemes $\bX\to \bS$ and $\bY\to\bS$ is given by a morphism $\bX\to \bY$ which commutes with the structure morphisms to $\bS$, as in the commutative diagram from \ref{rel-def-set}. We obtain the category of difference schemes over $\bS$,
$$
\sigma\Sch_{/\bS}.
$$
\end{definition}
\begin{remark}
By Yoneda's lemma, if $\bX$ is associated with a difference ring $(A,\sigma)$ and $\bY$ with $(B,\sigma)$, a morphism $\bX\to \bY$ always arises from a difference ring homomorphism $(B,\sigma)\to (A,\sigma)$, whence the category of difference schemes is opposite to that of difference rings,
$$
\sigma\Sch\simeq (\sigma\Rng)^\text{op}.
$$
Moreover, if $\bS$ corresponds to a difference ring $(k,\sigma)$, then
$$
\sigma\Sch_{/\bS}\simeq ((k,\sigma)\Alg)^\text{op}.
$$
\end{remark}


\begin{definition}

Let $f:\bX\to \bS$ be a morphism of difference schemes associated with a difference ring homomorphism $(k,\sigma)\to (A,\sigma)$. We say that
\begin{enumerate}
\item $\bX$ is \emph{transformally integral}, if $(A,\sigma)$ is a transformal domain;
\item $f$ is \emph{of finite $\sigma$-type}  if $(A,\sigma)$ is a $(k,\sigma)$-algebra of finite $\sigma$-type;
\item $f$ is \emph{of finite $\sigma$-presentation} over $S$ if $(A,\sigma)$ is a $(k,\sigma)$-algebra of finite $\sigma$-presentation. 
\item $f$ is a \emph{$\sigma$-localisation}, if $A=k[1/a]_{\sigma}$ for some $a\in k$.
\end{enumerate}
\end{definition}

\begin{definition}
Let $f=F(x,\sigma x,\ldots,\sigma^r x)\in k[x]_\sigma$, with $x=(x_1,\ldots,x_n)$, and some ordinary polynomial $F\in k[t_0,t_1,\ldots,t_r]$. The \emph{set of solutions to the equation}
$$
f(x)=0
$$
\emph{in  a difference ring $(R,\varphi)$}  extending $(k,\sigma)$ is defined as
$$
\{a\in R^n:F(a,\varphi a,\ldots,\varphi^r a)=0\}.
$$
\end{definition}

\begin{example}
The set of solutions of the difference equation $\sigma x=x$ in the difference field $K_q=(\bar{\F}_p,\varphi_q)$ is the finite field $\F_q$.
\end{example}

\begin{remark}\label{diffsch-def}
Let $\bS$ be a difference scheme assocatied with a difference ring $(k,\sigma)$, and consider a system of difference polynomial equations 
\begin{align*}
f_1(x_1,\ldots,x_n) & = 0\\
&\vdots  \\
f_m(x_1,\ldots,x_n) &=0,
\end{align*}
where $f_i\in k[x_1,\ldots,x_n]_\sigma$, and consider the difference scheme $\bX$ over $\bS$ associated to the $(k,\sigma)$-algebra of finite $\sigma$-presentation
$$
A=k[x_1,\ldots,x_n]_\sigma/\langle f_1,\ldots,f_m\rangle_\sigma.
$$
For any $(k,\sigma)$-algebra $(R,\sigma)$, the set of \emph{$(R,\sigma)$-rational points}
$$
\bX_{/\bS}(R,\sigma)=\Hom_{(k,\sigma)\Alg}(A,R)
$$
can be identified with the set of solutions of the above system in $(R,\sigma)$, i.e., the intersection of the solution sets of all equations $f_i=0$ in $(R,\sigma)$.

This observation clarifies how a difference scheme of finite presentation can be considered a (quantifier-free) \emph{definable set} as in \ref{def-set}.

\end{remark}

\begin{definition}
Let $f:\bX\to \bS$ be a morphism of difference schemes of finite $\sigma$-presentation over $\Z$, and let $\cC$ be the relative CDM-class of fields with Frobenius $K_q=(\bar{\F}_p,\varphi_q)$.  
We say that 
\begin{enumerate}
\item $f$ is of \emph{finite relative dimension}, if it is such when considered as a definable map over $\cC$ in the sense of \ref{fin-reldim};
\item $f$ is \emph{dominant}, if the dimension of its image as the definable set is of full dimension in $\bS$;
\item $f$ is \emph{finite}, if the map $f(K_q)$ between realisation sets  has finite fibres for all $K_q\in \cC$;
\item $f$ is \emph{generically finite}, if it fails to have finite fibres only over a lower-dimensional subset of $\bS$.
\end{enumerate}  
\end{definition}

\begin{remark}
All of the above properties of difference schemes and their morphisms have intrinsic formulations rooted in difference algebra, as given in \cite{udi-frob} and \cite{ive-mark}. In particular, the notion of \emph{total dimension} of difference schemes is shown to agree with the CDM-dimension in \cite{ive-mark}. We choose these equivalent formulations in order to simplify the presentation.  
\end{remark}

\begin{terminology}
Following the comparison of difference schemes to definable sets made in  \ref{diffsch-def},  we shall no longer use the boldface notation to denote difference schemes.
\end{terminology}

\begin{definition}\label{def-corresp}
Let $X$ and $Y$ be difference schemes. A \emph{correspondence} $X\rightsquigarrow Y$
is given by a difference scheme $W\subseteq X\times Y$ such that the projection $W\to X$ is dominant with generically finite fibres.
\end{definition}

\subsection{Algebraic constraint and group configuration}
The results in this subsection parallel those of Tao in \cite{tao-original}.

\begin{definition}
Let $S$ be a transformally integral difference scheme of finite $\sigma$-presentation over $\Z$ and let $X_1,\ldots,X_n, Y$ be difference schemes of finite $\sigma$-presentation and finite relative 
dimension over $S$, and let 
$$
f:X_1\times_S\cdots \times_S X_n\to Y
$$
be a morphism of difference schemes over $S$.
We say that $f$ is a \emph{moderate asymmetric expander} if there exist constants $c,C>0$ such that, for every difference field $K_q=(\bar{\F}_p,\varphi_q)$, every $s\in S(K_q)$ and every choice of
$A_i\subseteq X_{i,s}(K_q)$ with $|A_i|\geq C |X_{i,s}(K_q)|^{1-c}$, we have
$$
|f_s(A_1,\ldots,A_n)|\geq C^{-1} |Y_s(K_q)|.
$$
\end{definition}

\begin{assumption}[Expansion dichotomy]\label{expansion-dich}
Let $X,Y,Z$ be difference schemes of finite $\sigma$-presentation over a transformally integral $S$ with geometrically transformally integral fibres of finite relative 
dimension, and let 
$$
f:X\times_S Y\to Z
$$
be a morphism of difference schemes over $S$.
Then at least one of the following statements hold:
\begin{enumerate}
\item (Algebraic constraint). The morphism $X\times_S X\times_S Y\times_S Y\to Z\times_S Z\times_S Z\times_S Z$,
$$
(x,x',y,y')\mapsto (f(x,y),f(x,y'),f(x',y),f(x',y'))
$$
is not dominant.
\item (Moderate expansion property).  There exists a $\sigma$-localisation $S'$ of $S$ and a constant $C>0$ such that for every $K_q$, every $s\in S'(K_q)$, and every $A\subseteq X_s(K_q)$, $B\subseteq Y_s(K_q)$, if $|A||B|\geq C q^{-1/8} |X_s(K_q)||Y_s(K_q)|$, then $|f_s(A,B)|\geq C^{-1}|Z_s(K_q)|$.
In particular, the morphism $f$ is a moderate asymmetric expander above $S'$. 
\end{enumerate}
\end{assumption}
We have gone through the steps and methods used in the proof of \cite[Theorem~38]{tao-original} and consider that they can be followed formally to derive the above statement from \ref{reg-frob}. On the other hand, writing out the full proof would require us to repeat lengthy passages of Tao's work, including the \emph{self-improvement trick} from \cite{tao-blog} to obtain the error term of $O(|F|^{-1/4})$ in \ref{reg-frob} and the generalisation of the regularity lemma \ref{reg-frob} to hypergraphs following \cite[Theorem~35]{tao-original}. 

%
%

\begin{theorem}[Solving the algebraic constraint in dimension 1]\label{alg-constraint}
With notation of \ref{expansion-dich}, assume that $X$, $Y$ and $Z$ are of relative 
dimension 1 over $S$ and that the morphism
$X\times_S X\times_S Y\times_S Y\to Z\times_S Z\times_S Z\times_S Z$
given by
$$
(x,x',y,y')\mapsto (f(x,y),f(x,y'),f(x',y),f(x',y'))
$$
is not dominant. Then there exists a generically finite morphism $S'\to S$ and an algebraic group scheme $(A,*)$ over $S'$, which is either $\mathbb{G}_a$, $\mathbb{G}_m$ or of an elliptic curve over $S'$, and correspondences $\chi:X\rightsquigarrow A$, $\upsilon:Y\rightsquigarrow A$ and $\zeta:Z\rightsquigarrow A$ over $S'$ such that
$f$ is in correspondence with the group law on $A$,
$$
\zeta (f(x,y))=\chi(x)*\upsilon(y).
$$
\end{theorem}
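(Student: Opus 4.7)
The plan is to deduce this theorem from the model-theoretic group configuration theorem in the theory $\mathrm{ACFA}$ of existentially closed difference fields, essentially following the template of Elekes--Szab\'o in the algebro-geometric setting but in the difference-algebraic world. First I would interpret the non-dominance hypothesis in terms of $\SU$-rank (or equivalently, total dimension). Let $x, x', y, y'$ be generic independent points of $X_s$ and $Y_s$ over a suitable base, and set $u = f(x,y)$, $u' = f(x,y')$, $v = f(x',y)$, $v' = f(x',y')$. Non-dominance of the quadruple morphism means that the tuple $(u,u',v,v')$ does not have maximal possible total dimension, so there is a nontrivial algebraic relation among these four elements. Combined with the fact that each of $x,x',y,y'$ has relative total dimension $1$ and each $f(x,y)$-value has dimension bounded by $1$, this yields exactly the combinatorial/rank data of a \emph{group configuration quadrangle} in the ACFA sense attached to the points $x, x', y, y', u, u', v, v'$.

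Next I would invoke Hrushovski's group configuration theorem, valid in supersimple theories such as $\mathrm{ACFA}$, to extract from this quadrangle a type-definable connected group $(G,*)$ of $\SU$-rank $1$, together with definable generic relationships between $X, Y, Z$ and $G$. Elimination of imaginaries in $\mathrm{ACFA}$ (after a suitable $\sigma$-localisation $S'$ of $S$) upgrades the type-definable group to a definable one, and the definable correspondences $\chi: X \leftrightsquigarrow G$, $\upsilon: Y \leftrightsquigarrow G$, $\zeta: G \leftrightsquigarrow Z$ arise from the standard correspondence between the legs of the quadrangle and the constructed group; the factorisation identity $f(x,y) = \zeta(\chi(x) * \upsilon(y))$ is precisely what the group configuration theorem outputs for the ternary relation $\{(x,y,f(x,y))\}$.

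To pin down the isomorphism type of $G$, I would then apply the classification of $\SU$-rank $1$ definable groups in $\mathrm{ACFA}$ following Hrushovski. In total relative dimension $1$, such a group is definably isogenous (over a further localisation of $S'$) to a connected one-dimensional algebraic group, and the trichotomy for one-dimensional algebraic groups forces $G$ to be definably isogenous to a subgroup of $\mathbb{G}_a$, $\mathbb{G}_m$, or an elliptic curve over $S'$ --- precisely the three possibilities in the statement. The modular case (finite/trivial stabilisers) is ruled out by the assumption of a genuine algebraic constraint linking all four values.

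The hard part will be the rigorous verification that the non-dominance assumption of the quadruple morphism translates cleanly into the group configuration axioms in $\mathrm{ACFA}$ --- in particular, checking that the $\SU$-rank drop forced by non-dominance is exactly $1$ (not more), so that the resulting $G$ is one-dimensional rather than zero- or higher-dimensional, and that the required genericity/independence conditions on $x, x', y, y'$ hold after a $\sigma$-localisation. A secondary technical difficulty is ensuring definable (rather than merely $\sigma$-closed) representatives for the correspondences $\chi, \upsilon, \zeta$, which is where elimination of imaginaries and the passage to a localisation $S'$ of $S$ come in. Once these model-theoretic checks are in place, the geometric conclusion and the specific trichotomy of $G$ follow from established structural results on definable groups in $\mathrm{ACFA}$.
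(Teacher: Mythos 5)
Your high-level strategy agrees with the paper's: interpret the non-dominance hypothesis inside a large model of ACFA, extract a group configuration, and classify the resulting rank-one group. You also correctly identify the crux as the verification that the non-dominance of the quadruple map yields the independence/interboundedness data of a group configuration.

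Where you diverge from the paper, and where your outline is weaker, is in the passage from configuration to definable group. The paper works with the formalism of \emph{partial generic multiactions} from \cite{gc-btw} and \cite{gc-tw}: it takes $\pi$ to be the graph of $f_\eta$, proves by an explicit canonical-base computation (driven precisely by the non-dominance hypothesis) that $\widehat{\pi\circ\pi^{-1}}$ is a generic multiaction, arranges the data into a group configuration diagram, and then applies \cite[Lemma~3.3 and Theorem~3.1]{anand-piotr} to obtain directly a \emph{definable} group virtually definably embeddable in a one-dimensional algebraic group. The paper explicitly notes that the route you propose --- invoking the general group configuration theorem for simple theories --- only yields an almost-hyperdefinable group, and that upgrading it to a definable one is not a matter of ``elimination of imaginaries in ACFA after a $\sigma$-localisation''; the ACFA-specific Kowalski--Pillay results are what make the upgrade clean. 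Your sketch would need to be reworked to either invoke those results or supply the hyperdefinable-to-definable reduction honestly.

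There is also a genuine error: you assert that ``the modular case (finite/trivial stabilisers) is ruled out by the assumption of a genuine algebraic constraint''. This is wrong and also contradicts the theorem's statement, which allows $G$ to be definably isogenous to a \emph{subgroup} of $\mathbb{G}_a$, $\mathbb{G}_m$, or an elliptic curve. In ACFA such proper definable subgroups (for instance, kernels of difference-algebraic endomorphisms on algebraic groups) are typically modular; the algebraic constraint forces the existence of a rank-one group embedding into a one-dimensional algebraic group, but does not force that group to be field-like. Finally, the part you flag as ``the hard part'' --- that the $\SU$-rank drop is exactly one and that the genericity conditions hold --- is precisely the content of the paper's canonical-base argument, so until that is carried out your proposal remains a plan rather than a proof.
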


The proof requires a vast amount of model theory and it is no longer possible to keep our exposition self-contained, hence we provide a guide to the literature for the interested reader. 
\begin{itemize}
\item Ultraproducts of CDM-classes are \emph{measurable structures} in the sense of \cite{dugald-charlie} and can be treated as \emph{simple theories}, which are governed by a notion of \emph{forking independence}. For background on simple theories and for the basics of \emph{forking calculus} we refer the reader to \cite{casanovas} and \cite{wagner}.
\item For \emph{group configuration} in simple theories, we refer the reader to \cite{gc-btw} and \cite{gc-tw}. 

\item Hrushovski  shows  in \cite{udi-frob} that the limit theory of the relative CDM-class of fields with Frobenius is the theory ACFA of existentially closed difference fields. It is extensively studied in \cite{udi-zoe}.

\item For groups definable in algebraically closed fields, we shall use \cite{bouscaren-weil-chunk}, and for groups definable in ACFA we refer to \cite{anand-piotr}.
\end{itemize}

Martin Bays suggested that our proof can benefit from field-theoretic arguments specific to ACFA. We provide the summary of properties we use, which can all be found in \cite{udi-zoe}.

\begin{fact}\label{acfa-facts}
Let $(\Omega,\sigma)$ be a model of ACFA.
\begin{itemize}
\item[(a)] While ACFA is a simple theory, the underlying field $\Omega$ is algebraically closed, so 
\emph{stability theory} applies to it. 
\item[(b)] The model-theoretic algebraic closure of $A\subset\Omega$, denoted $\acl(A)$, is the field-theoretic algebraic closure of the inversive difference subfield of $\Omega$ generated by $A$. 
\item[(c)] The forking independence is witnessed by algebraic independence in the sense that, for $A,B,C\subset\Omega$, $A\ind_CB$ if and only if the fields $\acl(AC)$ and $\acl(BC)$ are linearly disjoint over $\acl(C)$.
\item[(d)] Let $a\in \Omega$, let $k$ be a difference subfield of $\Omega$, and let $k(a)_\sigma$ be the difference field generated by $a$ over $k$. If $\trdeg(k(a)_\sigma/k)=1$, then $\sigma(a)\in\acl(k(a))$ and thus $a\nind_k\sigma(a)$. This amounts to saying that the difference locus of $a$ over $k$ is of total dimension 1, or, equivalently, of CDM-dimension 1. Such an element also has SU-rank 1 over $k$. Henceforth we shall simply refer to such elements as having \emph{rank 1}. 
\item[(e)] The difference locus of a tuple $a\in \Omega$ of rank 1 elements is dense in its algebraic locus, and hence it is in a correspondence with its algebraic locus in the sense of \ref{def-corresp}.
\item[(f)] Given $a\in\Omega$ and a difference subfield $k$ of $\Omega$, the \emph{canonical basis} $\Cb(a/k)$ is the smallest perfect difference field over which the locus of $a$ over $k$ is defined. It equals, in the pure field language, to the canonical base of the type of $(a,\sigma(a),\ldots)$ over $k$. It has the property that
$a\ind_{\Cb(a/k)}k$. In arbitrary simple theories, canonical bases exist only as `hyperimaginaries', and one is forced to work with \emph{bounded closure} $\bdd$ as opposed to the algebraic closure. In ACFA canonical bases exist in the real word, so any occurrence of $\bdd$ below can be replaced by $\acl$.

\end{itemize}

\end{fact}

\begin{proof}

Our fist objective is to extract a group configuration from the algebraic constraint. We will use the context and notation of partial generic multiactions familiar from \cite{itai}, \cite{ive-th}, \cite{gc-btw} and \cite{gc-tw}, which works in a simple setting, suitable for dealing with arbitrary asymptotic classes whose non-principal ultraproducts are supersimple of finite $SU$-rank. 

An interested reader can reformulate it in purely field-theoretic language using \ref{acfa-facts}.

%
%

The base difference scheme $S$ is associated with a transformal domain $(R,\varsigma)$, so we can take its fraction field $(k,\varsigma)$ and work in a large model $(\Omega,\sigma)$ of ACFA extending it.

Let $\eta$ be a scheme-theoretic generic point of $S$, and  write $X_\eta$, $Y_\eta$ and $Z_\eta$ for the generic fibres of $X$, $Y$ and $Z$ over $S$, considered as difference schemes over $k$. For simplicity of notation, all the independences and bounded/algebraic closures and types we write below will be over $k$.

Let $(x_0,y_0,z_{00})$ in $\Omega$ be a generic point over $k$ on the graph of $f_\eta$, i.e.,  $x_0\in X_\eta$, $y_0\in Y_\eta$ are 
rank 1 points over $k$, with $x_0\ind y_0$ and $z_{00}=f_\eta(x_0,y_0)\in Z_\eta$. 

We may assume that $z_{00}\ind x_0$ and $z_{00}\ind y_0$, which implies that $x_0\in\bdd(y_0,z_{00})$ and $y_0\in\bdd(x_0,z_{00})$. Otherwise, we would have that $z_{00}\in\bdd(x_0)$ or $z_{00}\in\bdd(y_0)$ and we are in a degenerate case where the function $f_\eta$ defines a correspondence dependent on a single variable, which leads to the conclusion of the theorem in a trivial way.


Let  $\pi\subseteq X_\eta\times Y_\eta\times Z_\eta$ be the partial type  over $k$ refining the locus of $(x_0,y_0,z_{00})$ so that
$$
\models\pi(x,y,z)
$$ 
provided $f_\eta(x,y)=z$ and the points $x\in X_\eta$, $y\in Y_\eta$, $z\in Z_\eta$ are pairwise independent.
It is handy to abbreviate this by writing $yz\models x$. Note, if $yz\models x$, then $z\in\dcl(xy)$, $y\in\bdd(xz)$ and $x\in\bdd(yz)$. 
 Thus,  $\pi$ is a \emph{generic invertible multiaction} as in \cite[Definitions 2.1, 2.3]{itai} and \cite[2.1]{ive-th}.

The assumption of non-dominance in this context states that for all $x_0$, $x_1$, $y_0$, $y_1$, $z_{ij}=f_\eta(x_i,y_j)$ (or $y_jz_{ij}\models x_i$),
 the set $\{z_{00},z_{01},z_{10},z_{11}\}$ is dependent.

We claim that 
$$
\overline{\pi\circ\pi^{-1}}
$$
is a generic multiaction in the following sense (cf.~\cite[Definition 2.5, Section~2.3]{itai}, \cite[2.2, 2.3]{ive-th}, \cite[1.2]{gc-tw}). Starting with an independent triple $x_0$, $x_1$, $y_0$, let $y_0z_{00}\models x_0$, $y_0z_{10}\models x_1$, so that $z_{00}z_{10}\models x_1\circ x_0^{-1}$. Let $h=\Cb(z_{00}z_{10}/x_0x_1)\in \overline{\pi\circ\pi^{-1}}$. We need to verify that $h\ind x_0$ and $h\ind x_1$.

Let us choose $y_1\equiv_{x_0x_1}y_0$ such that $y_1\ind_{x_0x_1}y_0$. It follows that the set $\{y_0, y_1, x_0, x_1\}$ is independent. We denote by $z_{01}$ and $z_{11}$ the elements satisfying
$y_1z_{01}z_{11}\equiv_{x_0x_1}y_0z_{00}z_{10}$, which implies that $y_1z_{01}\models x_0$ and $y_1z_{11}\models x_1$.

Using $y_1\ind_{x_0x_1}y_0$, we get that $z_{01}\ind_{x_0x_1}z_{00}z_{10}$, which, together with $z_{01}\ind x_0x_1$ yields $z_{01}\ind x_0x_1z_{00}z_{10}$ and $z_{01}\ind_{z_{00}z_{10}}x_0x_1$. Using the non-dominance assumption, $z_{11}\in\bdd(z_{01}z_{00}z_{10})$, so 
$$
z_{01}z_{11}\ind_{z_{00}z_{10}}x_0x_1.
$$
We conclude that
$$
h=\Cb(z_{00}z_{10}/x_0x_1)=\Cb(z_{01}z_{11}/x_0x_1)=\Cb(z_{01}z_{11}/z_{00}z_{10})\in\bdd(z_{00}z_{10}).
$$
Thus, since $z_{00}z_{10}\ind x_0$ and $z_{00}z_{10}\ind x_1$, we confirm that $h\ind x_0$ and $h\ind x_1$.

In terminology of \cite{itai}, \cite{ive-th}, \cite{gc-btw} and \cite{gc-tw}, it follows that the set of germs of the multiaction $\overline{\pi\circ\pi^{-1}}$ is a polygroup chunk, which entails that the tuple 
$$
(x_0,x_1,h,y_0,z_{00},z_{10})
$$
forms a group configuration, conveniently depicted by the diagram
\begin{center}
\begin{tikzpicture}
\draw[fill=black] (0,0) circle (1.5pt);
\draw[fill=black] (0,1) circle (1.5pt);
\draw[fill=black] (0,2) circle (1.5pt);
\draw[fill=black] (1,2) circle (1.5pt);
\draw[fill=black] (2,2) circle (1.5pt);
\draw[fill=black] (.66,1.33) circle (1.5pt);
\node at (-0.3,0) {$h$};
\node at (-0.3,1) {$x_1$};
\node at (-0.3,2) {$x_0$};
\node at (2.3,2) {$y_0$};
\node at (1.0,2.2) {$z_{00}$};
\node at (.9,1.1) {$z_{10}$};
\draw[thick] (1,2)--(0,0) -- (0,2) -- (2,2) -- (0,1);
\end{tikzpicture}
\end{center}
where:
\begin{enumerate}
\item any three non-collinear points are independent;
\item in any triple of collinear points, any two are interbounded over the third;
\item $x_0$ is interbounded with $\Cb(y_0z_{00}/x_0)$, $x_1$ is interbounded with $\Cb(y_0z_{10}/x_1)$ and $h$ is interbounded with $\Cb(z_{00}z_{10}/h)$.
\end{enumerate}


Instead of using the group configuration for simple theories at this point, we prefer to use techniques specific to ACFA to construct a definable group. 

Using the fact that  this is a rank 1 group configuration in $(\Omega,\sigma)$, the same tuple forms a group configuration over $k$ in the underlying algebraically closed field $\Omega$. Indeed, the independence property (1) in the field reduct is clear, and interalgebraicity property (2) follows from \ref{acfa-facts}(b), (d). The property (3) in the reduct is follows from the description of canonical bases \ref{acfa-facts}(f) in the field language specialised to rank 1. 

The (stable) group configuration for algebraically closed fields  yields an ACF-definable group $(G,*)$ over the algebraic closure $\bar{k}$ so that the algebraic locus of $(x_0,y_0,z_{00})$ over $\bar{k}$ is in correspondence with the group law on $G$. Moreover, by results of van den Dries and Hrushovski (described in \cite{bouscaren-weil-chunk}), there is a 1-dimensional algebraic group $A_0$ defined over $\bar{k}$ whose group law is in correspondence with that of $G$.

Using \ref{acfa-facts}, the graph of  $f_\eta$ is in correspondence with the difference locus of $(x_0,y_0,z_{00})$ over $k$, which is in correspondence with the algebraic locus of $(x_0,y_0,z_{00})$ over $k$, which is in correspondence with the locus of the same tuple over $\bar{k}$, which is in correspondence with the group law in $A_0$ over $\bar{k}$. 

Alternatively, we could have argued, using  \cite[Lemma~3.3]{anand-piotr} on the 6-tuple above, that the graph of $f_\eta$ is in correspondence with a difference definable group $H$ over $\bar{k}$, and, by \cite[Theorem~3.1]{anand-piotr}, $H$ is \emph{virtually definably embeddable} into an algebraic group $A$ of dimension 1 over $\bar{k}$.

In either case,  the theorem lists all three possibilities for the 1-dimensional group $A_0$. 
Moreover, $A_0$ is defined over a finite extension $k'$ of $k$, so let $(L,\sigma)$ be the difference extension field of $(k,\varsigma)$ generated by $k'$. Through standard constructibility arguments of $\sigma$-localising $S$ to make it normal, and considering the normalisation of $S$ in $L$, we obtain a generically $\sigma$-finite difference scheme $S'\to S$ and a group scheme $A$ over $S'$ whose generic fibre is $A_0$, and correspondences over $S'$ relating the morphism $f$ and the group law in $A$.

%
%
%
\end{proof}

\begin{remark}
Combining Assumption~\ref{expansion-dich} and Theorem~\ref{alg-constraint}, we obtain the following.

Let $X$, $Y$, $Z$ be difference schemes of finite transformal type and relative total dimension 1 over $S$, and let 
$$
f:X\times_SY\to Z
$$
be a morphism of difference schemes over $S$.
Then at least one of the following statements hold.
\begin{enumerate}
\item The morphism $f$  corresponds to the additive or multiplicative group law in a way described in \ref{alg-constraint}.
\item The morphism $f$ corresponds to the addition law on an elliptic curve in a way described in \ref{alg-constraint}.
\item The morphism $f$ is a moderate asymmetric expander over a $\sigma$-localisation of $S$.
\end{enumerate}
\end{remark}

 

\appendix

\section{Inequalities}

\begin{fact}
Let $H$ be a Hilbert space. We have the following inequalities.
\begin{enumerate}
\item (Cauchy-Schwartz inequality). For $x,y\in H$, we have
$$
|\langle x,y\rangle|\leq \norm{x}\,\norm{y}.
$$
\item (Bessel's inequality). If $e_i$ is an orthonormal sequence in $H$, for every $x\in H$
$$
\sum_i |\langle x,e_i\rangle|^2\leq \norm{x}^2.
$$
\end{enumerate}
\end{fact}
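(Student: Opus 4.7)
The statement collects two classical Hilbert space inequalities, so the plan is to give the standard short proofs that appear in any functional analysis text, adapted to the notation of the paper.

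For part (1), the Cauchy-Schwarz inequality, the plan is to handle the trivial case $y=0$ separately and then, for $y\neq 0$, consider the vector $z = x - \frac{\langle x,y\rangle}{\norm{y}^2}y$. Expanding $0\leq\langle z,z\rangle$ using sesquilinearity and the fact that $\langle z,y\rangle=0$ by construction gives
$$
0\leq \norm{x}^2 - \frac{|\langle x,y\rangle|^2}{\norm{y}^2},
$$
from which the claimed inequality follows by rearranging and taking square roots. No deep input is needed beyond positivity of the inner product.

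For part (2), Bessel's inequality, the plan is to fix $N$ and consider the partial sum $s_N=\sum_{i=1}^N \langle x,e_i\rangle e_i$, then compute $\norm{x-s_N}^2$ by expanding the inner product and using orthonormality of the $e_i$. The cross terms yield
$$
\norm{x-s_N}^2 = \norm{x}^2 - \sum_{i=1}^N |\langle x,e_i\rangle|^2,
$$
and nonnegativity of the left-hand side gives the finite version of the inequality. Letting $N\to\infty$ yields the infinite version.

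There is no real obstacle here; both proofs are textbook, and the only care needed is to make sure the notation (sesquilinearity conventions, real versus complex scalars) is consistent with how the inner product is used elsewhere in the paper. Since the inequalities are invoked in Lemma~\ref{wk6-reg} only through their standard forms, the argument can be stated in two short paragraphs without further subtleties.
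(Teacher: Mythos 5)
Your proofs are the standard textbook arguments and are correct; the paper itself states this Fact without proof in the appendix, relying on it as a well-known result, so there is no alternative route in the paper to compare against. Your orthogonal-decomposition argument for Cauchy--Schwarz and the partial-sum expansion for Bessel are exactly what the lemma invoking these inequalities (Lemma~\ref{wk6-reg}) implicitly assumes.
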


\begin{fact}[H\"older's inequality]\label{holder-ineq}
Let $(X,\mu)$ be a measure space. Let $p, q\in[1,\infty]$ be \emph{H\"older conjugates}, namely satisfying $1/p+1/q=1$. Then, for all measurable real or complex functions $f, g$ on $X$, 
$$
\norm{fg}_1\leq \norm{f}_p\norm{g}_q.
$$  
\end{fact}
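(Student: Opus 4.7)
The plan is to prove H\"older's inequality by reducing to the pointwise Young inequality $ab \leq a^p/p + b^q/q$ (for $a,b \geq 0$ and $1/p+1/q=1$ with $p,q \in (1,\infty)$) and then integrating.

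First I would dispose of the degenerate cases. If $\|f\|_p = 0$ or $\|g\|_q = 0$, then $f = 0$ or $g = 0$ almost everywhere, so $fg = 0$ a.e.\ and the inequality holds trivially with $\|fg\|_1 = 0$. If $\|f\|_p = \infty$ or $\|g\|_q = \infty$, the right-hand side is $+\infty$ and there is nothing to prove. For the endpoint pair $(p,q) = (1,\infty)$ (and symmetrically $(\infty,1)$), the inequality $\|fg\|_1 \leq \|f\|_1 \|g\|_\infty$ follows immediately from the pointwise bound $|f(x)g(x)| \leq |f(x)| \cdot \|g\|_\infty$ almost everywhere, integrated over $X$.

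The main case is $p, q \in (1, \infty)$ with $1/p+1/q=1$ and $0 < \|f\|_p, \|g\|_q < \infty$. By homogeneity I would rescale and set $F = f/\|f\|_p$, $G = g/\|g\|_q$, so that $\|F\|_p = \|G\|_q = 1$. The key intermediate step is Young's inequality: for nonnegative reals $a, b$,
$$
ab \leq \frac{a^p}{p} + \frac{b^q}{q}.
$$
This I would derive from the strict concavity of $\log$ on $(0,\infty)$: writing $ab = \exp(\tfrac{1}{p}\log a^p + \tfrac{1}{q}\log b^q)$ and applying Jensen's inequality (i.e.\ $\exp$ of a convex combination is $\leq$ the convex combination of $\exp$), one obtains the claim; the cases $a = 0$ or $b = 0$ are trivial.

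Applying Young's inequality pointwise to $a = |F(x)|$ and $b = |G(x)|$ and integrating against $d\mu$ gives
$$
\int_X |F G|\, d\mu \leq \frac{1}{p}\int_X |F|^p\, d\mu + \frac{1}{q}\int_X |G|^q\, d\mu = \frac{1}{p} + \frac{1}{q} = 1.
$$
Unwinding the normalisation yields $\|fg\|_1 \leq \|f\|_p \|g\|_q$, which is the claim. There is no real obstacle here; the only mild subtlety is to ensure measurability of $|fg|$ and to handle the possibility of $\infty$ values in the endpoint cases carefully, but nothing beyond routine bookkeeping. The conceptual heart of the proof is the pointwise Young inequality, which itself follows cleanly from concavity of the logarithm.
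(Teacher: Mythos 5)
The paper states H\"older's inequality as a classical \emph{Fact} in the appendix and offers no proof of its own, so there is no argument in the paper to compare against. Your proof is correct and is the standard one: handle the degenerate cases and the endpoint pair $(1,\infty)$ directly, normalise so that $\norm{F}_p=\norm{G}_q=1$, establish Young's inequality $ab\leq a^p/p+b^q/q$ via convexity of $\exp$ (equivalently, concavity of $\log$), apply it pointwise, and integrate. All the steps are sound, including the dispatch of the measure-theoretic edge cases.
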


Note that, for $p=q=2$, the above yields the Cauchy-Schwartz inequality for $L^2(X,\mu)$. 

\begin{lemma}\label{holder-lemma}
If $a_i$, $b_i$, $u$, $v$ are positive real numbers satisfying
$$
\sum_i a_i^3b_i\leq uv\ \ \ \text{ and }\ \ \ \ \sum_ib_i\leq v,
$$
then
$$
\sum_i a_ib_i\leq u^{\frac{1}{3}}v.
$$
\end{lemma}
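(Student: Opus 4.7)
The plan is to apply Hölder's inequality (Fact~\ref{holder-ineq}) in its discrete form with conjugate exponents $p=3$ and $q=3/2$, after writing each term $a_ib_i$ as a product of two factors chosen so that cubing the first factor produces $a_i^3 b_i$ and raising the second to the $3/2$ power produces $b_i$.

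Concretely, first I would factor
$$
a_i b_i = \bigl(a_i b_i^{1/3}\bigr)\cdot b_i^{2/3}.
$$
Then, summing and applying Hölder with $p=3$, $q=3/2$, I would get
$$
\sum_i a_i b_i \;\leq\; \left(\sum_i a_i^3 b_i\right)^{1/3}\left(\sum_i b_i\right)^{2/3}.
$$
Finally, substituting the two hypotheses $\sum_i a_i^3 b_i \leq uv$ and $\sum_i b_i \leq v$ yields
$$
\sum_i a_i b_i \;\leq\; (uv)^{1/3}\, v^{2/3} \;=\; u^{1/3}\, v,
$$
which is the desired conclusion.

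There is no real obstacle here; the only point that requires care is choosing the right split of $a_i b_i$ so that the two factors match the two hypotheses under exponents $3$ and $3/2$. Since both sides in each hypothesis are nonnegative and all $a_i,b_i,u,v$ are positive, there are no issues with the fractional powers or with applying Hölder to the measure space given by counting measure on the index set.
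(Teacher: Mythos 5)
Your proof is correct and follows exactly the same route as the paper: the same factorization $a_i b_i = (a_i b_i^{1/3})\cdot b_i^{2/3}$, the same application of Hölder with conjugate exponents $3$ and $3/2$, and the same substitution of the hypotheses at the end.
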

\begin{proof}
H\"older's inequality, applied to the space $X=\N$ with the counting measure, states that for real numbers $x_i$, $y_i$, $i\in\N$,  and $p$, $q$ satisfying $1/p+1/q=1$,
$$
\sum_i|x_iy_i|\leq \left(\sum_i|x_i|^p\right)^\frac{1}{p}\left(\sum_i|y_i|^q\right)^\frac{1}{q}.
$$
We can now write
$$
\sum_ia_ib_i=\sum_ia_ib_i^\frac{1}{3}\cdot b^\frac{2}{3}\leq\left(\sum_ia_i^3b_i\right)^\frac{1}{3}\cdot\left(\sum_i(b^\frac{2}{3})^\frac{3}{2}\right)^\frac{2}{3}\leq u^\frac{1}{3}v^\frac{1}{3}\cdot v^\frac{2}{3}=u^\frac{1}{3}v.
$$
\end{proof}

\subsection*{Acknowledgements}
Mirna D\v zamonja received funding from the European's Union Horizon 2020 research and innovation programme under the Maria Sko{\l}odowska-Curie grant agreement No 1010232. She also thanks the Institut d'Histoire et de Philosophie des Sciences et des Techniques, CNRS et Universit\' e Panthéon-Sorbonne, Paris, where she is an Associated Member and the University of East Anglia, Norwich, UK, where she is a Visiting Professor.
The authors thank the referee for a very careful, detailed and helpful referee report.


\normalsize

\end{document}